\theoremstyle{thmstyleone}%
\newtheorem{theorem}{Theorem}%  meant for continuous numbers
\newtheorem{proposition}{Proposition}%
\newtheorem{lemma}{Lemma}
\newtheorem{assumption}{Assumption}
\newcommand{\pos}[1]{\bigl[#1\bigr]_{+}}
\newcommand{\scp}[2]{\langle #1, #2 \rangle}
\theoremstyle{thmstyletwo}%
\theoremstyle{thmstylethree}%
\newtheorem{definition}{Definition}%
\begin{document}

\title{Descent-Net: Learning Descent Directions for Constrained Optimization}

%%=============================================================%%
%% Prefix	-> \pfx{Dr}
%% GivenName	-> \fnm{Joergen W.}
%% Particle	-> \spfx{van der} -> surname prefix
%% FamilyName	-> \sur{Ploeg}
%% Suffix	-> \sfx{IV}
%% NatureName	-> \tanm{Poet Laureate} -> Title after name
%% Degrees	-> \dgr{MSc, PhD}
%% \author*[1,2]{\pfx{Dr} \fnm{Joergen W.} \spfx{van der} \sur{Ploeg} \sfx{IV} \tanm{Poet Laureate}
%%                 \dgr{MSc, PhD}}\email{iauthor@gmail.com}
%%=============================================================%%

% \author*[1,2]{\fnm{First} \sur{Author}}\email{iauthor@gmail.com}

% \author[2,3]{\fnm{Second} \sur{Author}}\email{iiauthor@gmail.com}
% \equalcont{These authors contributed equally to this work.}

% \author[1,2]{\fnm{Third} \sur{Author}}\email{iiiauthor@gmail.com}
% \equalcont{These authors contributed equally to this work.}

% \affil*[1]{\orgdiv{Department}, \orgname{Organization}, \orgaddress{\street{Street}, \city{City}, \postcode{100190}, \state{State}, \country{Country}}}

% \affil[2]{\orgdiv{Department}, \orgname{Organization}, \orgaddress{\street{Street}, \city{City}, \postcode{10587}, \state{State}, \country{Country}}}

% \affil[3]{\orgdiv{Department}, \orgname{Organization}, \orgaddress{\street{Street}, \city{City}, \postcode{610101}, \state{State}, \country{Country}}}

\author{Zisheng Zhou, Dengyu Zheng, Zirui Chen and Shixiang Chen
\thanks{
School of Mathematical Sciences, Key Laboratory of the Ministry of Education for Mathematical Foundations and Applications of Digital Technology, University of Science and Technology of China, Hefei, Anhui, China}}

%%==================================%%
%% sample for unstructured abstract %%
%%==================================%%

\maketitle

\begin{abstract}
Deep learning approaches, known for their ability to model complex relationships and fast execution, are increasingly being applied to solve large optimization problems. However, existing methods often face challenges in simultaneously ensuring feasibility and achieving an optimal objective value. To address this issue, we propose Descent-Net, a neural network designed to learn an effective descent direction from a feasible solution. By updating the solution along this learned direction, Descent-Net improves the objective value while preserving feasibility. Our method demonstrates strong performance on both synthetic optimization tasks and the real-world AC optimal power flow problem, while also exhibiting effective scalability to large problems, as shown by portfolio optimization experiments with thousands of assets.
\end{abstract}

% \keywords{ Learning to Optimize, Constrained optimization, Feasible direction, Unrolling}

\section{Introduction}\label{sec1}

Constrained optimization problems are ubiquitous in practical applications. While traditional optimization algorithms \cite{luenberger1984linear,nocedal1999numerical} offer strong theoretical guarantees, their computational efficiency often falls short when applied to modern large-scale problems. As a result, there is increasing interest in leveraging neural network-based methods to tackle constrained optimization tasks. In recent years, many emerging works have proposed end-to-end frameworks for solving constrained optimization problems \cite{donti2017task,amos2017optnet,zhang2018ista,agrawal2019differentiable,geng2020coercing}.
 
This research direction falls under the broader framework of Learning to Optimize (L2O)\cite{bengio2021machine,chen2022learning}, which aims to leverage deep learning to improve the efficiency and scalability of optimization algorithms. Unlike traditional methods that rely on handcrafted update rules, L2O methods attempt to automatically learn optimization behaviors through data-driven approaches. However, most existing works consider unconstrained optimization problems. This motivates the development of more flexible frameworks that can incorporate feasibility into the learning dynamics while remaining scalable to large or structured problems. 

In this work, we propose Descent-Net, a neural network architecture that takes as input the gradients of both the objective and constraint functions at a given feasible point. The network is trained to predict an effective descent direction and an appropriate step size, enabling objective improvement while maintaining feasibility. Initialized from feasible solutions obtained by methods such as DC3~\cite{dontidc3}, H-proj~\cite{liang2024homeomorphic}, etc., our method typically converges to a near-optimal solution in just a few update steps.

Our main contributions are summarized as follows:
\begin{itemize}
\item We design a new exact penalty subproblem that generates feasible descent directions for linearly constrained optimization problems, forming the foundation of our approach with theoretical convergence guarantees.

\item We propose a neural network architecture, \textbf{Descent-Net}, which unrolls a projected subgradient method to solve the proposed subproblem. The network iteratively refines feasible solutions by learning effective descent directions at each step.

\item We validate the effectiveness and scalability of Descent-Net through extensive experiments on quadratic programs, a simple nonconvex variant of QP with linear constraints, and the nonlinear AC optimal power flow problem. The method scales to QP instances with up to 5000 variables and to large-scale portfolio optimization tasks with up to 4000 assets, consistently producing high-quality feasible solutions with low relative errors and strong computational efficiency.

% \item We demonstrate the effectiveness of our approach through experiments on quadratic programs (QP) and a simple nonconvex variant of QP, both of which involve linear constraints. To further illustrate the applicability of Descent-Net beyond the linear setting, we also evaluate it on the nonlinear AC optimal power flow problem. Across all experiments, our method produces high-quality solutions with reliably low relative errors.

% \item We demonstrate the scalability of Descent-Net through QP benchmarks with problem dimensions reaching 5000 variables, as well as large-scale portfolio optimization experiments with up to 4000 assets. Our method consistently produces high-quality feasible solutions while maintaining computational efficiency.
\end{itemize}

\section{Related work}
\textbf{Classical methods for constrained optimization.}
Classical approaches to constrained optimization, including projected gradient descent, feasible direction methods \cite{zoutendijk1960methods,topkis1967convergence}, and primal-dual algorithms \cite{luenberger1984linear,nocedal1999numerical,boyd2011distributed}, have been extensively studied and widely applied. These methods typically offer convergence guarantees under suitable assumptions, but often suffer from high iteration complexity and significant computational cost. Recently, GPU-accelerated algorithms have also been developed, such as \verb|HPR-QP| \cite{chen2025hpr} and \verb|CuClarabel| \cite{CuClarabel}. \\
\textbf{Learning to optimize (L2O).}
L2O seeks to replace hand-crafted optimization routines with learnable architectures that generalize across problem instances. Broadly speaking, L2O methods can be classified into model-free and model-based approaches \cite{chen2022learning}. Model-free methods, such as those based on recurrent neural networks (e.g., LSTM) \cite{graves2014generating,andrychowicz2016learning}, aim to learn update rules directly from data. Model-based methods, on the other hand, incorporate algorithmic structure into the design of the network. Notable examples include LISTA \cite{chen2018theoretical}, unrolled manifold optimization algorithms \cite{gao2022learning}. However, most existing L2O methods focus on unconstrained or simple constrained  problems and fail to guarantee feasibility when applied to general constrained settings.

To address this, recent works incorporate constrained optimization structures into neural networks via projection layers \cite{yangprojection,liang2024homeomorphic} or differentiable optimization modules \cite{amos2017optnet,agrawal2019differentiable,bolte2021nonsmooth}. However, these methods typically suffer from scalability and the need to solve nested optimization problems during training. Some approaches target special cases, such as linear constraints \cite{wang2023linsatnet}, but their applicability to more general problems remains limited. An alternative line of work draws inspiration from primal-dual methods, leading to neural architectures based on ADMM \cite{xie2019differentiable} and PDHG \cite{li2024pdhg}. Such methods are usually evaluated by the KKT error, where feasibility and objective optimality are of the same order of magnitude, which makes them less suitable for scenarios requiring strict constraint satisfaction. Recent efforts have attempted to address this by designing networks that explicitly return feasible points \cite{dontidc3,wu2025constraint}; however, such methods still fall short of reaching near-optimal solutions in practice.\\
\textbf{Implicit layers.}
A growing body of work explores the use of implicit neural architectures, including optimization layers \cite{amos2017optnet}, neural ordinary differential equations (ODEs) \cite{chen2018neural}, and deep equilibrium models (DEQs) \cite{bai2019deep}. These models define network outputs via the solution of fixed-point or optimization problems, allowing compact yet highly expressive representations. Despite their potential, these approaches often incur high computational costs during both forward and backward passes. In the context of constrained optimization, additional challenges arise when estimating gradients of projection operators, particularly in the presence of complex or nonconvex constraints. Approximate techniques such as gradient perturbation or stochastic sampling \cite{poganvcic2019differentiation, berthet2020learning} have been proposed, but typically come at the expense of increased variance and computational overhead.

\section{Problem setup }

For any given data $x\in\mathbb{R}^d$, we solve the following constrained optimization problem
\begin{equation}\label{prob:constraint}
    \min_{y \in \mathbb{R}^n} f_x(y), \quad \text{s.t.}\quad y \in \mathcal{C}:=\{y\mid g_x(y) \leqslant 0, \: h_x(y) = 0\},
\end{equation}
where $f_x, g_x$, and $h_x$ are smooth (but not necessarily convex) functions that may depend on the input data $x$. We assume that there are $m$ equality constraints and $l$ inequality constraints:
\begin{align*}
    h_x(y) &= [h_{x,1}(y), h_{x,2}(y),\cdots, h_{x,m}(y)]^T = 0,\\
    g_x(y) &= [g_{x,1}(y), g_{x,2}(y),\cdots, g_{x,l}(y)]^T \leqslant 0,
\end{align*}
where $h_{x,i}:\mathbb{R}^n\rightarrow\mathbb{R}$ and $g_{x,j}:\mathbb{R}^n\rightarrow\mathbb{R}$ for all $i = 1,\cdots, m$ and $j = 1,\cdots, l$. We have the following common assumptions for this problem.
\begin{assumption}\label{assump:close-bounded}
     The feasible set
  $\mathcal{C}$ is non-empty and closed; the sub-level set $\{y\in\mathcal{C}\mid f_x(y)\leqslant f_x(y_0)\}$ is bounded.
\end{assumption} 
\begin{assumption}\label{assump:linear_indep_eq}
     We assume that at any feasible point $y$, the gradients of the equality constraints, $\nabla h_i(y)$, for 
 $i=1,2,\cdots, m$, are linearly independent.
\end{assumption}
We also assume that the Linear Independence Constraint Qualification (LICQ) holds, which guarantees that the Karush–Kuhn–Tucker (KKT) conditions are necessary for local optimality.

\begin{assumption}[LICQ]
\label{assump:licq}
Let \( y^* \in \mathcal{C} \) be a local optimal point  of problem \eqref{prob:constraint}.
We assume that the set of active constraint gradients at \( y^* \),
\[
\left\{ \nabla h_i(y^*) \right\}_{i=1}^m
\;\cup\;
\left\{ \nabla g_j(y^*) \right\}_{j \in \mathcal{A}(y^*)},
\quad
\text{where } \mathcal{A}(y^*):= \{ j \in \{1,\cdots,l\} \mid g_j(y^*) = 0 \},
\]
is linearly independent.
\end{assumption}
 The notation $\mathcal{A}$ denotes the \textit{active set}\footnote{This is distinct from the standard definition of the active set, which typically includes the indices corresponding to the equality constraints. }, i.e., the set of inequality constraints that are satisfied with equality.
 
\begin{assumption}\label{assump:Uniform_boundedness}
    Let $\mathcal{X} \subseteq \mathbb{R}^p$ be a compact set and assume that all training and test parameters satisfy $x \in \mathcal{X}$. For each $x \in \mathcal{X}$, consider the feasible set $\mathcal{C}$. We assume that: 
    \begin{enumerate}
        \item ({Uniform boundedness of feasible sets}) There exists a compact set $Y \subseteq \mathbb{R}^n$ such that  
       \[
       \mathcal{C} \subseteq Y \quad \text{for all } x \in \mathcal{X}.
       \]
       \item ({Smoothness and uniform gradient bound})  
       The functions $f_x, h_x, g_x$ are continuously differentiable in $y$, and the maps  
       \[
       (x, y) \mapsto \nabla_y f_x(y)\quad \text{and} \quad (x, y) \mapsto \nabla_y g_x(y)
       \]  
       are continuous on $\mathcal{X} \times Y$.  
       Then, by compactness, there exist constants \(L_f > 0\) and \(L_g > 0\) such that  
       \[
       \| \nabla_y f_x(y) \|_2 \leqslant L_f \quad \text{and} \quad \| \nabla_y g_x(y) \|_2 \leqslant L_g \quad \text{for all } x \in \mathcal{X}, \, y \in \mathcal{C}.
       \]
    \end{enumerate}
\end{assumption}
This assumption is reasonable, since practical training always involves a finite dataset, guaranteeing the existence of an appropriate upper bound.

\begin{assumption}\label{assump:margin}
   There exists a constant $\delta > 0$ such that for every $x \in \mathcal{X}$ and every feasible point $y \in \mathcal{C}$,  
   \[
   \min_{j: g_{x,j}(y) < 0} \left( -g_{x,j}(y) \right) \geqslant \delta_g,
   \]  
   with the convention that the minimum over an empty index set is $+\infty$ (i.e., when all inequality constraints are active).
\end{assumption}
In fact, this assumption is not restrictive. In practice, one may choose $\delta_g$ to be a very small constant (e.g., $10^{-5}$) and treat a constraint as active whenever $0 \leqslant -g_{x,j}(y) < \delta_g$.

\subsection{Feasible directions method}
The method of feasible directions (MFD) was originally developed by Zoutendijk in the 1960s \cite{zoutendijk1960methods}. However, a well-known drawback of MFD is that it may fail to converge due to the so-called jamming phenomenon. To address this issue, various fundamental modifications and extensions of MFD have since been proposed and studied \cite{zoutendijk1960methods,topkis1967convergence,pironneau1973rate,luenberger1984linear}. 
In this section, we briefly review the framework of MFD under the assumption that the constraints  $h_x(y)$ and $g_x(y)$ are linear.
 
Based on the first-order approximation of the constraint functions, it can be inferred that, to maintain the feasibility of the solution, a suitable descent direction $d$ at the current iterate $y$ should satisfy the following conditions:
\begin{equation}\label{6}
\begin{aligned}
    \langle d,\nabla h_{x,i}(y)\rangle&=0,\quad\text{for } i=1,\cdots,m,\\
    \langle d,\nabla g_{x,j}(y)\rangle&\leqslant0,\quad\text{for } j\in \mathcal{A}=\{1\leqslant j\leqslant l:\:g_{x,j}(y)=0\},
\end{aligned}
\end{equation}
where $\nabla h_{x,i}$ denotes the gradient of the equality constraints and $\nabla g_{x,j}$ corresponds to the inequality constraints. 

\paragraph{Zoutendijk Direction-Finding Subproblem. \cite{zoutendijk1960methods}} 
 The Zoutendijk method computes a search direction $d \in \mathbb{R}^n$ at a feasible point $y$ by solving the following linear program:
\begin{equation}\label{eq:zoutendijk}\tag{MFD}
\begin{aligned}
\min_{d \in \mathbb{R}^n} \quad & \nabla f_x(y)^\top d \\
\text{s.t.} \quad & \nabla h_x(y)^\top d = 0, \quad  \nabla g_{x,j}(y)^\top d \leqslant 0, \quad j \in \mathcal{A}, \quad \|d\|_\infty \leqslant 1.
\end{aligned}
\end{equation}
Here, $\nabla h_x(y)^\top \in \mathbb{R}^{m \times n}$ denotes the Jacobian matrix of equality constraints.

The first constraint ensures that the direction is tangent to the equality constraint, while the second maintains feasibility with respect to the active inequalities. The infinity norm constraint serves to normalize the direction and keep the subproblem bounded.

The step size is then chosen as the largest feasible value such that $y + \alpha d$ remains in the feasible set:
\[
\bar{\alpha} = \max \{ \alpha \in (0,1] \mid y + \alpha d \in \mathcal{C} \}.
\]
However, when the iterate approaches the boundary of the feasible region, the step size $\bar{\alpha}$ may become arbitrarily small, potentially causing convergence issues \cite{topkis1967convergence}.

\paragraph{Topkis--Veinott Uniformly Feasible Direction Subproblem \cite{topkis1967convergence}} 
To resolve this issue, Topkis and Veinott proposed a uniformly feasible direction (UFD) formulation:
\begin{equation}\label{eq:Topkis}\tag{UFD}
\begin{aligned}
\min_{d \in \mathbb{R}^n} \quad & \nabla f_x(y)^\top d \\
\text{s.t.} \quad & \nabla h_x(y)^\top d = 0, \quad  \nabla g_{x,j}(y)^\top d \leqslant - M \cdot g_{x,j}(y), \quad j = 1,\cdots,l, \\
& \sum_{i=1}^n \lvert d_i\rvert = 1.
\end{aligned}
\end{equation}
The main difference lies in the fact that all inequality constraints are considered, and a constant $M > 0$ is introduced. Notably, setting $M=\infty$ recovers the original formulation in \eqref{eq:zoutendijk}. This modification ensures that the computed direction  $d$ satisfies
\[
g_{x,j}(y + \alpha d)\leqslant 0, \quad \text{for all } j,
\quad \text{as long as } \alpha \leqslant \frac{1}{M},
\]
thus providing a uniform lower bound on feasible step sizes and overcoming the stalling issues of the original method. It can be shown that the direction obtained from \eqref{eq:Topkis} is a feasible descent direction. Moreover, under the Assumption \ref{assump:licq}, any accumulation point of the iterates generated by this method \cite{zoutendijk1960methods,faigle2013algorithmic} satisfies the KKT conditions.

\section{Algorithm}

\subsection{Reformulation of UFD subproblem} 

Our goal is to design a learning-to-optimize (L2O) algorithm for solving the structured problem described above. However, both the Zoutendijk and Topkis--Veinott methods require solving  constrained subproblems at each iteration, which are not suitable for direct embedding into neural networks. To address this, we reformulate the subproblem by exact penalty method. This enables us to implement the solver as an unrolled optimization process of projected subgradient method, forming the basis of our L2O algorithm. In the following, we describe the reformulated subproblem and the corresponding L2O architecture.

Motivated by \eqref{eq:zoutendijk}, we first formulate the following penalized subproblem:
\begin{equation}\label{eq:subproblem-1}
\begin{aligned}
    \min_{d}  \nabla f_x(y)^\top d+ \sum_{j=1}^l \ c_j \max\left(\langle d,\nabla g_{x,j}(y)\rangle,-M g_{x,j}(y)\right), \quad \mathrm{s.t.} \quad d\in\mathcal{D},
\end{aligned}
\end{equation}
where $\mathcal{D}=\{d:\|d\|_2\leqslant 1\text{ and } \langle d,\nabla h_{x,i}(y)\rangle=0,\:\forall i=1,\cdots, m\}$, $c_j>0$ and $M>0$ are the regularization parameters. The hinge penalty is exact if the parameter $c_j$ is large enough.

The hinge penalty is exact if the parameter $c_j$ is large enough.

\begin{lemma}[Exact hinge penalty]
\label{lem:exact_penalty_l2}
Given any feasible point $y\in \mathcal{C}$, denote $c_{\min}:=\min_j c_j$.  
If we have
\begin{equation}\label{eq:lambda-cond-main}
   c_{\min} >\frac{L_f}{M\delta_g},
\end{equation}
where $c_{\min}$ is selected independently of $x$, then every global minimizer of \eqref{eq:subproblem-1} is optimal for the following $L_2$-norm UFD subproblem 
\begin{equation}\label{eq:Topkis-2}\tag{UFD-L2}
\begin{aligned}
\min_{\|d\|_2\leqslant 1} \quad & \nabla f_x(y)^\top d \\
\text{s.t.} \quad & \nabla h_x(y)^\top d = 0, \quad  \nabla g_{x,j}(y)^\top d \leqslant - M \cdot g_{x,j}(y), \quad j = 1,\cdots,l. 
\end{aligned}
\end{equation}
\end{lemma}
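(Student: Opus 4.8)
The plan is to recognize \eqref{eq:subproblem-1} as a standard $\ell_1$ exact-penalty reformulation of \eqref{eq:Topkis-2} and then run the classical exact-penalty argument, with the margin constant $\delta_g$ supplying the quantitative bound that the threshold \eqref{eq:lambda-cond-main} must beat. First I would rewrite the penalty termwise using $\max(a,b)=b+\pos{a-b}$ with $a=\scp{d}{\nabla g_{x,j}(y)}$ and $b=-Mg_{x,j}(y)$, giving
\[
c_j\max\bigl(\scp{d}{\nabla g_{x,j}(y)},-Mg_{x,j}(y)\bigr)=-c_jMg_{x,j}(y)+c_j\pos{\scp{d}{\nabla g_{x,j}(y)}+Mg_{x,j}(y)}.
\]
The first term is independent of $d$, so over $d\in\mathcal D$ problem \eqref{eq:subproblem-1} has the same minimizers as $\min_{d\in\mathcal D}F(d)$, where $F(d):=\nabla f_x(y)^\top d+\sum_{j=1}^l c_j\pos{\scp{d}{\nabla g_{x,j}(y)}+Mg_{x,j}(y)}$ is exactly the $\ell_1$ penalty function for the inequality constraints of \eqref{eq:Topkis-2}. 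Here $F$ is convex, $\mathcal D$ is convex and contains the origin, and $d=0$ is feasible for \eqref{eq:Topkis-2} since $g_{x,j}(y)\leqslant 0$; hence the feasible set of \eqref{eq:Topkis-2} is nonempty and its optimal value is at most $F(0)=0$.

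Next, let $\hat d$ be any global minimizer of $F$ over $\mathcal D$ and suppose, for contradiction, that its violation set $V:=\{j:\scp{\hat d}{\nabla g_{x,j}(y)}+Mg_{x,j}(y)>0\}$ is nonempty. I would exploit that the whole segment $\{t\hat d:t\in[0,1]\}$ lies in $\mathcal D$, so $t\mapsto F(t\hat d)$ is a convex function on $[0,1]$ minimized at the endpoint $t=1$, whence its one-sided derivative there is nonpositive. That derivative equals $\nabla f_x(y)^\top\hat d+\sum_{j\in V}c_j\scp{\hat d}{\nabla g_{x,j}(y)}$, because only the violated hinges are differentiably active at $t=1$ (for a non-violated constraint the kink value $-Mg_{x,j}(y)\geqslant 0$ forces the relevant one-sided slope to vanish). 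This yields the key inequality $\sum_{j\in V}c_j\scp{\hat d}{\nabla g_{x,j}(y)}\leqslant -\nabla f_x(y)^\top\hat d\leqslant L_f$, where the last step uses $\|\hat d\|_2\leqslant 1$ together with $\|\nabla f_x(y)\|_2\leqslant L_f$ from Assumption \ref{assump:Uniform_boundedness}.

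Then I would convert membership in $V$ into a quantitative lower bound. Any violated $j$ satisfies $\scp{\hat d}{\nabla g_{x,j}(y)}>-Mg_{x,j}(y)$, and invoking the margin $-g_{x,j}(y)\geqslant\delta_g$ of Assumption \ref{assump:margin} gives $\scp{\hat d}{\nabla g_{x,j}(y)}>M\delta_g>0$. Substituting into the displayed bound, $c_{\min}M\delta_g\leqslant\sum_{j\in V}c_j\scp{\hat d}{\nabla g_{x,j}(y)}\leqslant L_f$, i.e. $c_{\min}\leqslant L_f/(M\delta_g)$, contradicting \eqref{eq:lambda-cond-main}. Hence $V=\emptyset$ and $\hat d$ is feasible for \eqref{eq:Topkis-2}. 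Feasibility makes every hinge vanish, so $F(\hat d)=\nabla f_x(y)^\top\hat d$; and for any feasible $d$ of \eqref{eq:Topkis-2} we likewise have $F(d)=\nabla f_x(y)^\top d\geqslant F(\hat d)$, so $\hat d$ minimizes the objective over the feasible set and therefore solves \eqref{eq:Topkis-2}.

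The crux — and the step deserving the most care — is the passage from the qualitative fact $j\in V$ to the quantitative bound $\scp{\hat d}{\nabla g_{x,j}(y)}>M\delta_g$: this is the only place the margin (Assumption \ref{assump:margin}) and the threshold constant $L_f/(M\delta_g)$ are used, and it is what makes the penalty exact. The delicate point is that a constraint which is \emph{exactly} active carries no margin, so the clean bound applies only to constraints satisfied with a strict gap; the argument therefore relies on the convention following Assumption \ref{assump:margin}, under which every constraint retained with a genuinely negative $g_{x,j}(y)$ enjoys the buffer $\delta_g$ (and a sufficiently large $c_{\min}$ correspondingly dominates the associated multiplier). Verifying that this convention indeed covers all constraints entering the penalty sum, so that no violated constraint escapes the margin bound, is the main obstacle I would need to settle carefully.
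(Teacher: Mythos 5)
Your argument is correct where the paper's own argument is correct, and at its core it is the same exact-penalty contradiction: assume a global minimizer $\hat d$ of \eqref{eq:subproblem-1} violates some constraint, exploit that the whole ray $\{t\hat d: t\in[0,1]\}$ lies in $\mathcal D$, and derive $c_{\min}\leqslant L_f/(M\delta_g)$ from the two bounds $\|\nabla f_x(y)\|_2\leqslant L_f$ and $\scp{\hat d}{\nabla g_{x,j}(y)} > -Mg_{x,j}(y)\geqslant M\delta_g$ for violated $j$. The execution differs: the paper constructs the explicit scaled comparator $\alpha(d)\,d$ (with $\alpha(d)$ the smallest ratio $b_j/\scp{a_j}{d}$ over violated $j$, where $b_j:=-Mg_{x,j}(y)$) and compares function values using a Lipschitz estimate on the linear part, whereas you take the one-sided derivative of the convex map $t\mapsto F(t\hat d)$ at $t=1$, which optimality forces to be nonpositive and to which only the violated hinges contribute. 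Your infinitesimal version is marginally cleaner — no choice of a minimizing index $\bar\jmath$, no norm estimate $\|d-\hat d\|_2\leqslant \delta/b_{\min}$ — but the quantitative content is identical, so this is essentially the paper's proof in differential form.

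The caveat you flag at the end — that an exactly active constraint ($g_{x,j}(y)=0$) carries no margin, so a violated active constraint yields only $\scp{\hat d}{\nabla g_{x,j}(y)}>0$ rather than $>M\delta_g$ — is a genuine issue, but it is present, unacknowledged, in the paper's proof as well: there $b_{\min}$ is defined as $\min_{j:b_j>0}b_j$, and both the claim $\alpha(d)\in(0,1)$ and the chain $1-\alpha(d)\leqslant\delta/b_{\bar\jmath}\leqslant\delta/b_{\min}$ silently presuppose that every violated constraint has $b_{\bar\jmath}>0$, i.e.\ is inactive at $y$; if a violated constraint is active, then $\alpha(d)=0$ and the estimate degenerates. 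So your proposal is exactly as complete as the published proof. If you wish to actually close the hole, observe that under the standing assumptions the troublesome case cannot arise for linear constraints: by Assumption~\ref{assump:margin} and convexity of $\mathcal C$, a constraint active at one feasible point and strictly inactive at another would take values in $(-\delta_g,0)$ somewhere on the connecting segment, so each $g_{x,j}$ is either identically zero on $\mathcal C$ or has margin at least $\delta_g$ on all of $\mathcal C$; and a constraint identically zero on $\mathcal C$ produces (by LP duality applied to the valid inequality $-g_{x,j}\leqslant 0$) a nontrivial vanishing nonnegative combination of gradients of constraints active throughout $\mathcal C$, contradicting LICQ (Assumption~\ref{assump:licq}) at a local optimum. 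Hence every constraint that can be violated does carry the margin, which is precisely what both your proof and the paper's implicitly use.
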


To prove Lemma~\ref{lem:exact_penalty_l2}, we introduce the following notations:
\begin{itemize}
  \item Gradient of the linearized objective: $p:=\nabla f_x(y)\in\mathbb{R}^{n}$.
  \item For each constraint $j$: 
        $a_j:=\nabla g_{x,j}(y)\in\mathbb{R}^{n}$ and
        $b_j:=-M\,g_{x,j}(y)$.
  \item Linear-equality matrix: $E:=\nabla h_x(y)^{\top}\in\mathbb{R}^{m\times n}$.         Let $P:=I-E^{\top}(EE^{\top})^{-1}E$ denote the orthogonal projector onto $\ker(E)$.
  \item Feasible search set:
        \[
          \mathcal D:=\{d\in\mathbb{R}^{n}\mid Ed=0,\ \|d\|_2\leqslant 1\}.
        \]
  \item UFD feasible region:
        \[
          F:=\{d\in\mathcal D:\ \langle a_j,d\rangle\leqslant b_j,\ j=1,\cdots,l\}.
        \]
\end{itemize}

Under this notation, the penalized subproblem \eqref{eq:subproblem-1} becomes  
\begin{equation}\label{eq:pen}
   \min_{d\in\mathcal D}\;
   \Phi(d)
   :=\langle p,d\rangle
     +\sum_{j=1}^l c_j\max\{\langle a_j,d\rangle,b_j\},
   \qquad c_j>0,
\tag{Pen}
\end{equation}
while the UFD subproblem \eqref{eq:Topkis-2} can be written as
\begin{equation}\label{eq:UFD}
   \min_{d\in F}\langle p,d\rangle.
\tag{UFD-L2}
\end{equation}
Thus, in the new notation, Lemma~\ref{lem:exact_penalty_l2} asserts that if
$c_{\min}:=\min_j c_j$ satisfies
\[
     c_{\min} > \frac{L_f}{M\delta_g},
\]
then every global minimizer of \eqref{eq:pen} must lie in $F$, and therefore
the two problems share the same minimizers.

\begin{proof}
Let 
\[
   \tilde{L}:= \|\nabla f_x(y)\|_2,\qquad b_{\min}:=\min_{j:b_j>0}b_j.
\]
By assumptions (\ref{assump:Uniform_boundedness}) and (\ref{assump:margin}), we have $\tilde{L}\leqslant L_f$ and $b_{\min}\geqslant M\delta_g$. Hence 
\[
c_{\min} >\frac{L_f}{M\delta_g}\geqslant \frac{\tilde{L}}{b_{\min}}.
\]

Suppose $d\in\mathcal D$ is the optimal point of problem \eqref{eq:pen}, but $d\notin F.$ Define the violation vector $r(d):=(\pos{\scp{a_1}{d}-b_1},\dots,\pos{\scp{a_l}{d}-b_l})\in\mathbb{R}_{\geqslant0}^l$ and let $V(d):=\{j\mid r_j(d)>0\}$ be the index set of violated constraints.

If $r(d)=0$, then $d\in F$. Otherwise, define
\[
\alpha(d):=\min_{j\in V(d)}\frac{b_j}{\langle a_j, d\rangle}\in(0,1),
\qquad
\hat d:=\alpha(d)\,d .
\]
Since $Ed=0$ and $\alpha(d)\leqslant 1$, we have $\hat d\in\mathcal D$.
Moreover, for every $j$, $\langle a_j, \hat d\rangle=\alpha(d)\langle a_j, d\rangle\leqslant b_j$, so $\hat d\in F$.

Select $\bar\jmath\in V(d)$ that attains the minimum in $\alpha(d)$ and
set $\delta:=r_{\bar\jmath}(d)=\langle a_{\bar\jmath}, d\rangle-b_{\bar\jmath}>0$.
Then
\[
     1-\alpha(d)
     = 1-\frac{b_{\bar\jmath}}{\langle a_{\bar\jmath}, d\rangle}
     = \frac{\delta}{\langle a_{\bar\jmath}, d\rangle}
     \leqslant\frac{\delta}{b_{\bar\jmath}}
     \leqslant\frac{\delta}{b_{\min}},
\]
where we used $\langle a_{\bar\jmath}, d\rangle>b_{\bar\jmath}$.
Since $\|d\|_2\leqslant1$, we obtain
\[
     \Vert d-\hat d\Vert_2 = (1-\alpha(d))\Vert d\Vert_2
     \leqslant\frac{\delta}{b_{\min}}.
\]

The linear part is $\tilde{L}$-Lipschitz on $\mathcal D$:
\[
   \bigl\lvert \langle p, d\rangle-\langle p, \hat d\rangle\bigr\rvert
    \leqslant \tilde{L}\Vert d-\hat d\Vert_2
    \leqslant \frac{\tilde{L}}{b_{\min}}\;\delta.
\]
Since $\hat d\in F$, we have $\max\{\langle a_j, \hat d\rangle,b_j\}=b_j$ for all $j$, while for $d$
\[
     \max\{\langle a_j, d\rangle,b_j\}-b_j=\pos{\langle a_j, d\rangle-b_j}=r_j(d).
\]
Thus
\[
      \Phi(d)-\Phi(\hat d)
      =  \bigl(\langle p, d\rangle-\langle p, \hat d\rangle\bigr)
         +\sum_{j\in V(d)}c_j r_j(d).
\]
The second term is bounded below by
$c_{\min}\sum_{j\in V(d)}  r_j(d) \geqslant c_{\min}\delta$,
so using the Lipschitz bound,
\[
      \Phi(d)-\Phi(\hat d)
      \geqslant \bigl(c_{\min}-\tfrac{\tilde{L}}{b_{\min}}\bigr)\delta.
\]
By definition, the coefficient of $\delta$ is positive,
hence $\Phi(d)>\Phi(\hat d)$ for $\hat{d}\in\mathcal{D}$, contradicting the optimality of $d$.
Therefore all global minimizers of \eqref{eq:pen} must lie in $F$.

On $F$ the penalty term vanishes, i.e., $\Phi(d)=\langle p, d\rangle+\sum_jc_j b_j$.
Thus \eqref{eq:UFD} and \eqref{eq:pen} share the same minimizers and
their optimal values differ only by the constant $\sum_jc_j b_j$.

\end{proof}

A specific choice of $c_j$ that satisfies \eqref{eq:lambda-cond-main} is
\begin{equation}\label{eq:cj}
    c_j=\frac{\|\nabla f_x(y)\|_2}{\epsilon-\frac{1}{2}Mg_{x,j}(y)}, \quad  j = 1,\cdots, l.
\end{equation}
The constant $\epsilon>0$ is a small positive number added to ensure numerical stability during the calculations. Intuitively, when $g_{x,j}(y)$ is close to zero, which indicates that the point $y$ lies near the boundary of the $j$-th inequality constraint, the corresponding weight $c_j$ should be larger, as such constraints are more likely to be violated in subsequent updates. By assigning higher weights to these near-active constraints, the network is encouraged to prioritize directions  $d$ that satisfy $\langle d,\nabla g_{x,j}(y)\rangle\leqslant -M g_{x,j}(y)$, which helps prevent constraint violations.   We remark there are  many possible surrogates for $c_j$, e.g., $c_j=\exp(-\delta g_{x,j}(y))$ or the softmax function.

\begin{proposition}\label{proj}
Let $H\in\mathbb{R}^{n\times m}$ denote the matrix formed by the gradients of the equality constraints
\begin{equation*}
    H=\begin{bmatrix}
        \nabla h_{x,1}(y)&\cdots&\nabla h_{x,m}(y)
    \end{bmatrix}.
\end{equation*}
Then, under Assumption~ \ref{assump:linear_indep_eq}, the expression for the projection onto $\mathcal{D}$ is given by
\begin{equation}\label{2}
    \mathcal{P}(d)=\left\{\begin{matrix}
        \hat{d},& ~ \text{if} ~ \| \hat{d}\|_2\leqslant 1,\\
        \hat{d}/\|\hat{d}\|_2,& ~ \text{otherwise},
    \end{matrix}\right. \quad\text{where }\quad \hat{d}=d-H(H^\top H)^{-1}H^\top d.
\end{equation}
\end{proposition}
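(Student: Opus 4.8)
The plan is to read $\mathcal P(d)$ as the solution of the Euclidean projection problem $\min_{z\in\mathcal D}\tfrac12\|z-d\|_2^2$ and to exploit the fact that $\mathcal D$ is the intersection of a linear subspace through the origin with a ball centered at the origin. First I would record that $\mathcal D=\mathcal S\cap B$, where $\mathcal S:=\{z\in\mathbb R^n\mid H^\top z=0\}$ encodes the equality constraints $\langle d,\nabla h_{x,i}(y)\rangle=0$ and $B:=\{z\mid\|z\|_2\leqslant 1\}$. Under Assumption~\ref{assump:linear_indep_eq} the columns of $H$ are linearly independent, so $H^\top H$ is invertible and $P:=I-H(H^\top H)^{-1}H^\top$ is well defined; it is exactly the orthogonal projector onto $\mathcal S=\operatorname{range}(H)^\perp$. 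Accordingly I set $\hat d:=Pd\in\mathcal S$.

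Next I would reduce the constrained problem over $\mathcal D$ to a one-dimensional radial question by an orthogonal (Pythagorean) decomposition. Writing $d=\hat d+(d-\hat d)$ with $d-\hat d\in\operatorname{range}(H)=\mathcal S^\perp$, every feasible $z\in\mathcal D\subseteq\mathcal S$ satisfies $z\perp(d-\hat d)$ and also $\hat d\perp(d-\hat d)$, so that $\|z-d\|_2^2=\|z-\hat d\|_2^2+\|d-\hat d\|_2^2$. The second term is a constant independent of $z$, hence $\mathcal P(d)=\arg\min_{z\in\mathcal D}\|z-\hat d\|_2^2$. The cross-term cancellation here is the crux of the argument and is precisely what makes the two-stage scheme---project onto $\mathcal S$, then onto the ball---correct; it uses both that $\mathcal S$ passes through the origin and that $B$ is centered there.

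Finally I would split into two cases according to the size of $\|\hat d\|_2$. If $\|\hat d\|_2\leqslant 1$, then $\hat d\in\mathcal S\cap B=\mathcal D$ attains the value $0$, so $\mathcal P(d)=\hat d$. If $\|\hat d\|_2>1$, I claim the minimizer is $\hat d/\|\hat d\|_2$: it is a scalar multiple of $\hat d\in\mathcal S$, hence lies in $\mathcal S$, and has unit norm, so it belongs to $\mathcal D$. Moreover $\hat d/\|\hat d\|_2$ is the standard Euclidean projection of $\hat d$ onto the ball $B$ (the radial retraction of an exterior point), so it minimizes $\|z-\hat d\|_2$ over all $z\in B$; since $\mathcal D\subseteq B$ and this minimizer already lies in $\mathcal D$, it minimizes over $\mathcal D$ as well. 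Combining the two cases yields the stated formula. The only genuinely delicate points are invoking Assumption~\ref{assump:linear_indep_eq} to justify the inverse defining $P$, and verifying the orthogonality that produces the clean Pythagorean split; the remaining steps are the textbook projections onto a subspace and onto a centered ball.
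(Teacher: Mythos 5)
Your proof is correct, but it takes a genuinely different route from the paper's. The paper argues via the KKT conditions: it forms the Lagrangian $\tfrac{1}{2}\|d'-d\|_2^2+\lambda^\top H^\top d'+\mu(\|d'\|_2^2-1)$, solves the stationarity equation together with $H^\top d'=0$ to get $\lambda=(H^\top H)^{-1}H^\top d$, and then splits on the multiplier: $\mu=0$ yields $d'=\hat d$, while $\mu>0$ forces $\|d'\|_2=1$ and, after computing $(1+2\mu)^2=\|\hat d\|_2^2$, yields $d'=\hat d/\|\hat d\|_2$. You instead decompose $\mathcal{D}=\mathcal{S}\cap B$ and use the Pythagorean identity $\|z-d\|_2^2=\|z-\hat d\|_2^2+\|d-\hat d\|_2^2$ valid for all $z\in\mathcal{S}$, which reduces the problem to projecting $\hat d$ onto the ball inside the subspace; in effect you show that projection onto the intersection equals the composition of the two individual projections, which holds here precisely because the subspace passes through the origin and the ball is centered there, so the cross term vanishes. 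What your route buys: it is more elementary (no multipliers), it certifies global optimality directly rather than through stationarity — the paper implicitly relies on convexity to upgrade its KKT computation to sufficiency, and never explicitly ties its two multiplier cases to the condition $\|\hat d\|_2\leqslant 1$ appearing in the statement, whereas your case split is indexed by $\|\hat d\|_2$ itself. What the paper's route buys: the Lagrangian calculation extends mechanically to situations where the two constraint sets are not both centered at the origin, in which case your two-stage argument would fail because the orthogonality producing the clean split would no longer hold.
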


\begin{proof}
Given any vector \( d \in \mathbb{R}^n \), we aim to compute its projection onto \( \mathcal{D} \), i.e., solve the following problem:
\begin{equation*}
\min_{d' \in \mathbb{R}^n} \frac{1}{2} \|d' - d\|_2^2 \quad \text{s.t.} \quad \|d'\|_2 \leqslant 1,\quad H^\top d'=0.
\end{equation*}

Without loss of generality, we assume that the Linear Independence Constraint Qualification (LICQ) holds. Otherwise, the projection reduces to the origin $d'=0$. Now we derive the KKT conditions for this problem from the Lagrangian
\[
\mathcal{L}(d', \lambda, \mu) = \frac{1}{2} \|d' - d\|_2^2 + \lambda^\top H^\top d' + \mu(\|d'\|_2^2 - 1),
\]
where \( \lambda \in \mathbb{R}^{n-m} \) and \( \mu \geqslant 0 \) are the Lagrange multipliers.

Taking the gradient with respect to \( d' \) and setting it to zero gives:
\begin{equation*}
    d' - d + H \lambda + 2\mu d' = 0 \quad \Rightarrow \quad (1 + 2\mu)d' + H \lambda = d.
\end{equation*}
Since $H^\top d' = 0$, we have
\begin{equation*}
    H^\top H \lambda=(1 + 2\mu)H^\top d' +H^\top H \lambda = H^\top d\quad\Rightarrow\quad
    \lambda=(H^\top H)^{-1} H^\top d.
\end{equation*}
We consider two cases:
\textbf{Case 1:} If \( \mu = 0 \), then the projection is
\begin{equation*}
    d'=d-H\lambda=d - H (H^\top H)^{-1} H^\top d=\hat{d}.
\end{equation*}
\textbf{Case 2:} If \( \mu > 0 \), then we have $\|d'\|=1$ and
\begin{equation*}
(1+2\mu)^2
= (1+2\mu)^2 (d')^\top d'
= (d - H\lambda)^\top (d - H\lambda)
= \hat{d}^\top \hat{d}
= \|\hat{d}\|^2.
\end{equation*}
Hence, the projection is:
\begin{equation*}
    d'=\frac{1}{1+2\mu}\left(d - H (H^\top H)^{-1} H^\top d\right)=\frac{1}{\|\hat{d}\|}\hat{d}.
\end{equation*}
\end{proof}

Consequently, the procedure of the projected subgradient method for solving this problem is as follows:
\begin{equation}\label{pgm}
d_{k+1}=\mathcal{P}\big(d_k-\gamma_k \mathbf{u}_k\big),
\end{equation}
where $\gamma_k>0$ is the step size and $\mathcal{P}$ is the projection operator defined in~\eqref{2}. Let $\textbf{1}_{\{\cdot\}}$ denotes the indicator function, the subgradient term $\mathbf{u}_k$ is given by
\begin{equation}\label{uk}
\mathbf{u} _k = \nabla f_x(y)+\sum_{j=1}^lc_j\textbf{1}_{\{\langle d^k,\nabla g_{x,j}(y)\rangle\geqslant -M g_{x,j}(y)\}}\nabla g_{x,j}(y).
\end{equation}

Note that in many practical problems, the matrix $H$ is fixed across instances or does not change frequently. In such cases, the projection in \eqref{2} can be precomputed, making the computational cost manageable. Examples include decision-focused learning setups, where the equality constraints remain constant across instances \cite{tan2020learning}. In other problems, the equality constraints are simple, allowing the projection to be computed efficiently; for instance, in classical portfolio optimization\cite{fabozzi2008portfolio}, the budget constraint enables a straightforward projection.

\subsection{Design of Descent Module}
Directly solving problem \eqref{eq:subproblem-1} using the projected subgradient method usually results in slow convergence, due to the use of diminishing step size. To address this, we propose Descent-Module, which is designed by unrolling the projected subgradient algorithm. In our proposed network architecture, each layer takes the form of one iteration of the projected (sub)gradient method,  
\begin{equation}
    d_{k+1}=\mathcal{P}\big(d_k- \gamma_kT^k(\mathbf{u}_k)\big).
\end{equation}
The key difference is that we apply learnable modules $T^k$ to the subgradient term $\mathbf{u}_k$, and the definition of $T^k$ is given as follows:
\begin{equation}\label{7}
    T^k(\mathbf{u}_k) = \mathbf{V}^k \text{ReLU}\left(\mathbf{W}^k \mathbf{u}_k + \mathbf{b}_1^k\right)+\mathbf{b}_2^k,
\end{equation}
where $\mathbf{W}^k\in\mathbb{R}^{q\times n},\mathbf{V}^k\in\mathbb{R}^{n\times q}$ and $\mathbf{b}_1^k\in\mathbb{R}^q,\mathbf{b}_2^k\in\mathbb{R}^n$ are the weight matrix and bias that we need to learn and $\text{ReLU}(x)=\max(x,0)$. The design of the operator $T$ follows the work in \cite{wu2024designing}, 
which demonstrates that such an architecture has strong universal approximation capabilities.

The step size $\gamma_k$ is also set as a learnable parameter, which avoids the need for manual tuning, and we provide in Appendix~\ref{exp:gamma} the learned values of $\gamma_k$ across different layers in our experiments.

The architectures of the Descent Module are illustrated in Figure~\ref{descent-module}. Each Descent Module consists of $K$ Descent Layers sharing the same architecture and the input to the first layer is chosen as $d_0 = -\nabla f_x(y)$. 
\begin{figure}[H]
  \centering
  \includegraphics[width=0.9\linewidth]{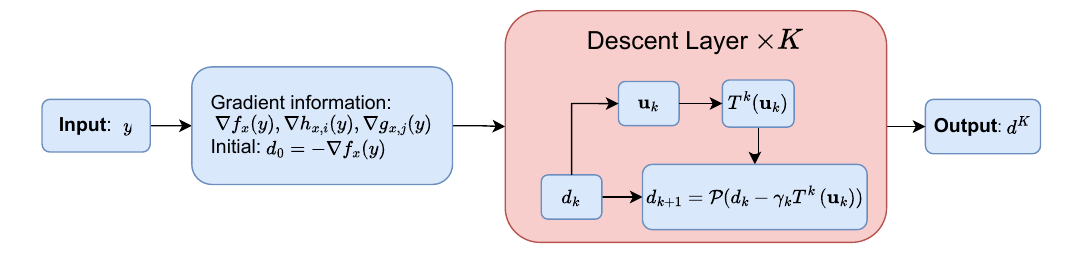} 
  \caption{Overall structure of the Descent Module}
  \label{descent-module}
\end{figure}

\begin{theorem}\label{th}
Let $d^*$ be the optimal solution of Problem~(\ref{eq:subproblem-1}), and let $g$ denote its objective function. For any $\varepsilon > 0$, there exists a $K_\varepsilon$-layer Descent-Module with a specific parameter assignment independent of $x$, whose output $d$ satisfies $\lvert g(d) - g(d^*)\rvert < \varepsilon$. Moreover, the number of layers satisfies $ K_\varepsilon \leqslant \frac{C}{\varepsilon^2}$ for some constant $C > 0$. 
\end{theorem}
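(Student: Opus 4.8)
The plan is to establish the result in two stages. First, I would set up the target as a standard convergence guarantee for the projected subgradient method on the convex problem \eqref{eq:subproblem-1}, and second, I would argue that the learnable module $T^k$ can be instantiated to recover exactly this classical iteration, so that the network inherits the convergence rate. The objective $g(d)=\Phi(d)=\langle p,d\rangle+\sum_{j}c_j\max\{\langle a_j,d\rangle,b_j\}$ is convex (a sum of a linear term and pointwise maxima of affine functions), and the feasible set $\mathcal{D}$ is convex, closed, and bounded by $\|d\|_2\leqslant 1$. Under Assumption~\ref{assump:Uniform_boundedness} the subgradients $\mathbf{u}_k$ in \eqref{uk} are uniformly bounded in norm by some $G$ (since $\|p\|_2\leqslant L_f$, $\|a_j\|_2\leqslant L_g$, and the weights $c_j$ are bounded on the compact parameter set $\mathcal{X}\times Y$). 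The diameter of $\mathcal{D}$ is at most $2$. These are precisely the ingredients for the textbook bound: with step sizes $\gamma_k = R/(G\sqrt{K})$ (or a diminishing schedule $\gamma_k \propto 1/\sqrt{k}$), the projected subgradient method satisfies $\min_{k\le K} g(d_k) - g(d^*) \leqslant \frac{RG}{\sqrt{K}}$, where $R$ bounds $\|d_0-d^*\|_2$. Squaring and rearranging gives that to achieve accuracy $\varepsilon$ it suffices to take $K_\varepsilon = \lceil R^2G^2/\varepsilon^2\rceil \leqslant C/\varepsilon^2$, which is exactly the claimed layer count with $C = R^2 G^2$.

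Second, I would verify that a specific, $x$-independent parameter assignment makes each Descent Layer reproduce one projected subgradient step. Taking $\mathbf{W}^k = I$, $\mathbf{V}^k = I$, $\mathbf{b}_1^k$ a large positive constant vector, and $\mathbf{b}_2^k = -\mathbf{b}_1^k$ forces $T^k(\mathbf{u}_k) = \mathrm{ReLU}(\mathbf{u}_k + \mathbf{b}_1^k) - \mathbf{b}_1^k$; for $\mathbf{b}_1^k$ chosen larger than the (uniform) bound $G$ on $\|\mathbf{u}_k\|_\infty$, the ReLU is in its linear regime and $T^k$ acts as the identity, so the update collapses to $d_{k+1}=\mathcal{P}(d_k - \gamma_k \mathbf{u}_k)$, the plain iteration \eqref{pgm}. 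Crucially, the bound $G$ depends only on $L_f, L_g$ and the parameter set $\mathcal{X}$, not on any individual $x$, so a single constant choice of $\mathbf{b}_1^k$ works uniformly, establishing the $x$-independence of the assignment. Setting $\gamma_k$ to the convergence-optimal schedule above then imports the rate from stage one.

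The main obstacle is the uniform subgradient bound. The coefficients $c_j$ in \eqref{eq:cj} blow up as $g_{x,j}(y)\to 0$ through the denominator $\epsilon - \tfrac{1}{2}Mg_{x,j}(y)$, and one must confirm they stay bounded uniformly over $x\in\mathcal{X}$ and feasible $y$; here Assumption~\ref{assump:margin} is essential, since it keeps inactive constraints bounded away from the boundary by $\delta_g$, while the stabilizing $\epsilon>0$ controls the near-active ones, giving $c_j \leqslant L_f/\epsilon$. Combining this with $\|a_j\|_2 \leqslant L_g$ yields $G \leqslant L_f + lL_g\cdot(L_f/\epsilon)$, a finite $x$-independent constant. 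A secondary care point is that the stated guarantee is on the best iterate (or a running average), which is the natural quantity for nonsmooth subgradient methods; I would take the network output $d$ to be this best iterate (or note that averaging can be folded into the final layer), so that $|g(d)-g(d^*)|<\varepsilon$ holds as claimed. The remaining steps — verifying convexity, boundedness of $\mathcal{D}$, and the elementary squaring of the $1/\sqrt{K}$ bound — are routine.
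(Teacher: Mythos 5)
Your proposal is correct and takes essentially the same two-stage route as the paper: the classical $O(1/\varepsilon^2)$ projected-subgradient rate (the paper's Lemma~\ref{lm1}) combined with an explicit, $x$-independent parameter assignment under which each Descent Layer reproduces one projected subgradient step (the paper's Lemma~\ref{lm2}), with the network truncated at the best iterate $K_\varepsilon$. The only cosmetic difference is that you realize $T^k$ as the identity via $\mathbf{W}^k=\mathbf{V}^k=I$ and a large bias while putting the step size in $\gamma_k$, whereas the paper uses a general full-column-rank $\mathbf{W}^k$ with its pseudo-inverse to get $T^k(\mathbf{u}_k)=\gamma_k\mathbf{u}_k$; both constructions rest on the same uniform subgradient bound from Assumption~\ref{assump:Uniform_boundedness} and the boundedness of the weights $c_j$.
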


The proof of Theorem~\ref{th} relies on two technical lemmas, which we present below before giving the full argument.

\begin{lemma}\label{lm1}
For any $\varepsilon > 0$, there exists a constant $C > 0$ such that if we set $K = \frac{C}{\varepsilon^2}$ and choose the step size in (\ref{pgm}) as $\gamma_k = \frac{1}{\sqrt{K}}$, then
\begin{equation*}
\min_{1 \leqslant k \leqslant K} g(d_k) - g(d^*) \leqslant \varepsilon,
\end{equation*}
where $d^*$ denotes the optimal solution of Problem~(\ref{eq:subproblem-1}) and $g$ denote its objective function.
\end{lemma}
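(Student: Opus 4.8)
The plan is to recognize Lemma~\ref{lm1} as the classical $O(1/\sqrt{K})$ convergence guarantee for the projected subgradient method on a convex problem, so the argument follows the textbook template once the problem-specific constants are controlled uniformly in $x$. First I would record the convexity structure: the objective $g=\Phi$ in \eqref{eq:pen} is convex, being the sum of a linear term and nonnegatively weighted pointwise maxima of affine functions, and the feasible set $\mathcal{D}$ is closed and convex (the intersection of the unit ball with the subspace $\ker E$). By Proposition~\ref{proj} the Euclidean projection $\mathcal{P}$ onto $\mathcal{D}$ is well-defined, and as a projection onto a convex set it is non-expansive. I would also check that the vector $\mathbf{u}_k$ in \eqref{uk} is a genuine subgradient of $g$ at $d_k$: the indicator selects the gradient $\nabla g_{x,j}(y)$ exactly on the active branch $\langle d_k,\nabla g_{x,j}(y)\rangle \geqslant -Mg_{x,j}(y)$ of each hinge, which lies in the corresponding subdifferential.

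Then I would carry out the standard one-step inequality. Using $d^*\in\mathcal{D}$, non-expansiveness of $\mathcal{P}$, and expanding the square gives
\[
\|d_{k+1}-d^*\|_2^2 \leqslant \|d_k-d^*\|_2^2 - 2\gamma_k\langle \mathbf{u}_k, d_k-d^*\rangle + \gamma_k^2\|\mathbf{u}_k\|_2^2 .
\]
The subgradient inequality $\langle \mathbf{u}_k, d_k-d^*\rangle \geqslant g(d_k)-g(d^*)$ converts the middle term into the optimality gap. Taking a constant step $\gamma_k\equiv\gamma$, summing over $k=1,\dots,K$ telescopes the distance terms, and dropping the nonnegative $\|d_{K+1}-d^*\|_2^2$ yields
\[
2\gamma\sum_{k=1}^K\bigl(g(d_k)-g(d^*)\bigr) \leqslant \|d_1-d^*\|_2^2 + \gamma^2\sum_{k=1}^K\|\mathbf{u}_k\|_2^2 .
\]
Bounding the left side below by $2\gamma K\min_{k}\bigl(g(d_k)-g(d^*)\bigr)$ and rearranging gives $\min_{k}(g(d_k)-g(d^*)) \leqslant \frac{R^2}{2\gamma K} + \frac{\gamma G^2}{2}$, where $R$ bounds the initial distance and $G$ the subgradient norms. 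Choosing $\gamma=1/\sqrt{K}$ balances the two terms, producing the bound $\frac{R^2+G^2}{2\sqrt{K}}$; solving $\frac{R^2+G^2}{2\sqrt{K}}\leqslant\varepsilon$ then gives $K=C/\varepsilon^2$ with $C=(R^2+G^2)^2/4$.

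The main work, and the only place requiring the paper's standing assumptions, is producing $R$ and $G$ uniformly in $x$, since this is what makes $C$ (and hence the parameter assignment in Theorem~\ref{th}) independent of $x$. For $R$, because $\mathcal{D}$ lies in the unit ball we have $\|d_1-d^*\|_2\leqslant 2$, so $R^2\leqslant 4$ regardless of $x$. For $G$, from \eqref{uk} I would estimate $\|\mathbf{u}_k\|_2 \leqslant \|\nabla f_x(y)\|_2 + \sum_{j=1}^l c_j\|\nabla g_{x,j}(y)\|_2$; Assumption~\ref{assump:Uniform_boundedness} supplies $\|\nabla f_x(y)\|_2\leqslant L_f$ and $\|\nabla g_{x,j}(y)\|_2\leqslant L_g$, while the explicit weights \eqref{eq:cj} satisfy $c_j=\|\nabla f_x(y)\|_2/(\epsilon-\tfrac12 Mg_{x,j}(y))\leqslant L_f/\epsilon$, because feasibility $g_{x,j}(y)\leqslant 0$ forces the denominator to be at least $\epsilon$. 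Hence $G\leqslant L_f + lL_gL_f/\epsilon$, depending only on the fixed constants $L_f,L_g,M,\epsilon,l$ and not on $x$. I expect this uniform-in-$x$ bookkeeping to be the crux of the proof; the descent and telescoping steps are entirely routine.
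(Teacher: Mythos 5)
Your proposal is correct and is exactly the argument the paper has in mind: the paper omits the proof, stating that the lemma is ``a direct consequence of the classical convergence theory of the projected subgradient method,'' and your write-up is precisely that classical telescoping argument (non-expansive projection, subgradient inequality, constant step $\gamma=1/\sqrt{K}$), instantiated with the paper's constants. Your additional bookkeeping showing $R\leqslant 2$ and $G\leqslant L_f + lL_gL_f/\epsilon$ uniformly in $x$ via Assumption~\ref{assump:Uniform_boundedness} and the choice \eqref{eq:cj} is a useful supplement, since the independence of $C$ from $x$ is what Theorem~\ref{th} actually needs.
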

This lemma is a direct consequence of the classical convergence theory of the projected subgradient method, and we therefore omit its proof.

\begin{lemma}\label{lm2}
    Given the sequence of iterates $\{d_1^{\text{proj}},\cdots,d_K^{\text{proj}}\}$ generated by the projected gradient method (\ref{pgm}) with initial input $d_0$, there exists a $K$-layer Descent-Net with a specific parameter assignment that, starting from the same initial input $d_0$, it produces the same iterative sequence, i.e., $d_k = d_k^{\text{proj}}$ for all $1 \leqslant k \leqslant K$.
\end{lemma}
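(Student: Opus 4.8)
The plan is to exhibit an explicit, data-independent parameter assignment under which every learnable module $T^k$ reduces to the identity map, so that each Descent-Net layer $d_{k+1}=\mathcal{P}(d_k-\gamma_k T^k(\mathbf{u}_k))$ collapses exactly onto one projected-subgradient step \eqref{pgm}. Since both recursions use the same projection operator from Proposition~\ref{proj} and evaluate the subgradient $\mathbf{u}_k$ from the current iterate by the same formula \eqref{uk}, matching the two update rules layerwise will force the trajectories to coincide. I would additionally set the learnable step sizes of the network equal to the step sizes $\gamma_k$ used in the given run of \eqref{pgm}.

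The central observation is that a single ReLU hidden layer represents the identity on $\mathbb{R}^n$ \emph{exactly}, through the componentwise decomposition $t=\mathrm{ReLU}(t)-\mathrm{ReLU}(-t)$. Concretely, I would take hidden width $q=2n$ and, for every $k$, set
\[
\mathbf{W}^k=\begin{bmatrix} I_n \\ -I_n \end{bmatrix},\qquad
\mathbf{b}_1^k=0,\qquad
\mathbf{V}^k=\begin{bmatrix} I_n & -I_n \end{bmatrix},\qquad
\mathbf{b}_2^k=0 .
\]
A direct computation from \eqref{7} then gives $T^k(\mathbf{u})=\mathrm{ReLU}(\mathbf{u})-\mathrm{ReLU}(-\mathbf{u})=\mathbf{u}$ for all $\mathbf{u}\in\mathbb{R}^n$, so $T^k$ is the identity. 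This is a fixed numerical choice independent of $x$, as the statement requires. If instead the width is constrained to $q=n$, one may use the uniform boundedness of $\mathbf{u}_k$ implied by Assumption~\ref{assump:Uniform_boundedness} together with the definition \eqref{eq:cj} of $c_j$ to choose a constant bias $\mathbf{b}_1^k$ that shifts every input into the nonnegative orthant, where ReLU acts as the identity; the $q=2n$ construction is preferable since it is exact for all inputs without a boundedness argument.

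With $T^k=\mathrm{id}$, the Descent-Net recursion becomes $d_{k+1}=\mathcal{P}(d_k-\gamma_k\mathbf{u}_k)$, which is precisely \eqref{pgm}. I would then close the argument by induction on $k$: the base case holds because both recursions start from the same $d_0$; for the inductive step, assuming $d_k=d_k^{\text{proj}}$, the subgradient $\mathbf{u}_k$ in \eqref{uk} --- evaluated at the common iterate with the same indicator convention --- is identical for the two methods, so applying the shared step and projection $\mathcal{P}$ yields $d_{k+1}=d_{k+1}^{\text{proj}}$.

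There is no genuine analytic difficulty here; the only points demanding care are (i) ensuring the identity representation is \emph{exact} rather than merely approximate, which the symmetric $\pm I_n$ construction with width $q\geqslant 2n$ guarantees for all inputs, and (ii) checking that the subgradient oracle \eqref{uk} is evaluated with the same tie-breaking convention in both recursions, so that the iterates still agree at points where the hinge penalty in \eqref{eq:subproblem-1} is nondifferentiable.
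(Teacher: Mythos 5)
Your proposal is correct, but it reaches the conclusion by a different construction than the paper, so a comparison is in order. The paper also reduces the lemma to making $T^k$ act as a (scaled) identity on the subgradient, but it does so by taking an arbitrary full-column-rank $\mathbf{W}^k$ (so any width $q\geqslant n$ suffices), bounding $\|\mathbf{u}_k\|_2$ via Assumption~\ref{assump:Uniform_boundedness} and the boundedness of the $c_j$, choosing the bias $\mathbf{b}_1^k=L\cdot\mathbf{1}_q$ large enough that $\mathrm{ReLU}$ is the identity on the shifted input $\mathbf{W}^k\mathbf{u}_k+\mathbf{b}_1^k$, and then cancelling the shift with $\mathbf{V}^k=\gamma_k(\mathbf{W}^k)^\dagger$ and $\mathbf{b}_2^k=-\gamma_k(\mathbf{W}^k)^\dagger\mathbf{b}_1^k$, yielding $T^k(\mathbf{u}_k)=\gamma_k\mathbf{u}_k$; this is exactly the fallback you sketch for the width-$n$ case. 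Your primary construction instead uses the symmetric decomposition $t=\mathrm{ReLU}(t)-\mathrm{ReLU}(-t)$ with $\mathbf{W}^k=\bigl[\begin{smallmatrix}I_n\\-I_n\end{smallmatrix}\bigr]$, $\mathbf{V}^k=[\,I_n\;\; -I_n\,]$ and zero biases, which makes $T^k$ the identity \emph{exactly and globally}: it needs no appeal to Assumption~\ref{assump:Uniform_boundedness} or to any bound on $c_j$, it makes the independence-of-$x$ claim immediate, and it avoids a notational wrinkle in the paper (whose target $T^k(\mathbf{u}_k)=\gamma_k\mathbf{u}_k$, plugged into the layer $d_{k+1}=\mathcal{P}\bigl(d_k-\gamma_kT^k(\mathbf{u}_k)\bigr)$, would scale the step by $\gamma_k^2$ unless the layer's learnable step size is implicitly set to one; your choice of $T^k=\mathrm{id}$ with the network's $\gamma_k$ matched to the PGM step size is the cleaner bookkeeping). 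The price is the architectural requirement $q\geqslant 2n$, whereas the paper's bias-shift argument works for any width $q \geqslant n$. Your point (ii) about tie-breaking is not a genuine issue --- formula \eqref{uk} with the indicator $\mathbf{1}_{\{\langle d_k,\nabla g_{x,j}(y)\rangle\geqslant -Mg_{x,j}(y)\}}$ is a deterministic map of the current iterate, so agreement of the iterates forces agreement of the subgradients --- but noting it does no harm, and your concluding induction is exactly the (implicit) closing step of the paper's proof.
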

\begin{proof}
It suffices to show that there exists a set of parameters such that \( T^k(\mathbf{u}_k) = \gamma_k \mathbf{u}_k \) for all $1\leqslant k\leqslant K$, where \( \mathbf{u}_k \) is defined in (\ref{uk}).

Let \( \mathbf{W}^k \in \mathbb{R}^{q \times n} \) be a full column rank matrix , so its left pseudo-inverse \( (\mathbf{W}^k)^\dagger \in \mathbb{R}^{n \times q} \) exists and satisfies \( (\mathbf{W}^k)^\dagger \mathbf{W}^k = I_n \).  

By assuption (\ref{assump:Uniform_boundedness}) and the defination of $\mathbf{u_k}$, we have 
\begin{equation*}
    \begin{aligned}
        \|\mathbf{u}_k\|_2&\leqslant \|\nabla f_x(y)\|_2+\sum_{j=1}^l \lvert c_j\rvert\cdot \sqrt{n}\cdot \lvert\nabla g_{x,j}(y)\rvert \\
        & \leqslant \|\nabla f_x(y)\|_2+\max_j(c_j)\cdot \sqrt{n}\cdot \|\nabla g_x(y)\|_1 \\
        & \leqslant \|\nabla f_x(y)\|_2+\max_j(c_j)\cdot n\cdot \|\nabla g_x(y)\|_2 \\
        & \leqslant L_f+\max_j(c_j)\cdot n\cdot L_g
    \end{aligned}
\end{equation*}
In the derivation of the third inequality, we used the equivalent norm theorem. Then we have 
\[
\|\mathbf{W}^k\mathbf{u}_k\|_1\leqslant\sqrt{n}\|\mathbf{W}^k\mathbf{u}_k\|_2\leqslant\sqrt{n}\|\mathbf{W}^k\|_2\|\mathbf{u}_k\|_2\leqslant \sqrt{n}\|\mathbf{W}^k\|_2(L_f+\max_j(c_j)\cdot n\cdot L_g)
\]
Let $L=\sqrt{n}\|\mathbf{W}^k\|_2(L_f+\max_j(c_j)\cdot n\cdot L_g)$. Define the bias vector as \( \mathbf{b}_1^k = L \cdot \mathbf{1}_q \), where \( \mathbf{1}_q \) denotes the \( q \)-dimensional vector with all entries equal to one. Then we have

\[
\text{ReLU}(\mathbf{W}^k \mathbf{u}_k + \mathbf{b}_1^k) = \mathbf{W}^k \mathbf{u}_k + \mathbf{b}_1^k,
\]
since each coordinate of \( \mathbf{W}^k \mathbf{u}_k + \mathbf{b}_1^k \) is positive.

Now, let the second layer weight be \( \mathbf{V}^k = \gamma_k (\mathbf{W}^k)^\dagger \), and the second bias be \( \mathbf{b}_2^k = -\gamma_k (\mathbf{W}^k)^\dagger \mathbf{b}_1^k \). Then we have:
\[
T^k(\mathbf{u}_k) = \mathbf{V}^k \cdot \text{ReLU}(\mathbf{W}^k \mathbf{u}_k + \mathbf{b}_1^k) + \mathbf{b}_2^k = \gamma_k (\mathbf{W}^k)^\dagger (\mathbf{W}^k \mathbf{u}_k + \mathbf{b}_1^k) - \gamma_k (\mathbf{W}^k)^\dagger \mathbf{b}_1^k = \gamma_k \mathbf{u}_k.
\]
This completes the proof.
\end{proof}

We now proceed to prove Theorem~\ref{th}. 
\begin{proof}
First, by Lemma~\ref{lm1}, we know that for any $\varepsilon > 0$, by choosing an appropriate step size, there exists an iteration sequence $\{d_k^{\text{proj}}\}_{k=1}^K$ generated by the projected subgradient method such that
\begin{equation*}
    \lvert \min_{1\leqslant k\leqslant K} g(d^{\text{proj}}_k) - g(d^*)\rvert < \varepsilon.
\end{equation*}
Let $K_\varepsilon=\text{argmin}_{1\le k\leqslant K}\:g(d^{\text{proj}}_k)$, then we have $K_\varepsilon\leqslant K=C/\varepsilon^2$ and 
\begin{equation*}
    \lvert g(d^{\text{proj}}_{K_\varepsilon}) - g(d^*)\rvert < \varepsilon.
\end{equation*}
Moreover, by Lemma \ref{lm2}, we know that there exists there exists a $K_\varepsilon$ layer Descent-Net such that the output of each layer exactly matches the corresponding iterate sequence $\{d_k^{\text{proj}}\}_{k=1}^{K_\varepsilon}$. In particular, we have
\begin{equation*}
    d_{K_\varepsilon} = d^{\text{proj}}_{K_\varepsilon}
\end{equation*}
Therefore,
\begin{equation*}
    \lvert g(d_{K_\varepsilon}) - g(d^*) \rvert < \varepsilon,
\end{equation*}
which is exactly the desired result.
\end{proof}

\subsection{Step size}

After obtaining the descent direction $d$ from the Descent module, we still need to determine a suitable step size. We assume that all constraints are linear. Since the Descent module contains the projection operator $\mathcal{P}$, the final descent direction $d$ produced by Descent module is orthogonal to the gradients of the equality constraints. Therefore, updating along $d$ will not violate the equality constraints. 

We only need to ensure that the step size is not too large to violate the inequality constraints. For each linear inequality constraint $g_{x,j}$, we have:
\begin{equation*}
    g_{x,j}(y+\alpha d)= g_{x,j}(y)+\alpha\cdot\langle d,\nabla g_{x,j}(y)\rangle.
\end{equation*}
If $\langle d,\nabla g_{x,j}(y)\rangle>0$, updating the solution along $d$ will increase $g_{x,j}$. To preserve the feasibility of the inequality constraint, i.e., $g_{x,j}(y+\alpha d)\leqslant 0$, the step size $\alpha$ must satisfy
\begin{equation*}
    \alpha\leqslant \frac{-g_{x,j}(y)}{\langle d,\nabla g_{x,j}(y)\rangle}.
\end{equation*}
Therefore, the maximum allowable step size is given by
\begin{equation}\label{eq:alpha}
    \alpha_{\max}=\min_{j\in\mathcal{I}}\frac{-g_{x,j}(y)}{\langle d,\nabla g_{x,j}(y)\rangle},\quad\text{where }~\mathcal{I}=\{j\mid \langle d,\nabla g_{x,j}(y)\rangle>0\}.
\end{equation}

To guarantee a sufficient decrease of the objective value, the step size $\alpha$ should also satisfy $f_x(y+\alpha d)<f_x(y)$. To obtain a sufficient decrease in the objective value, we introduce a learnable parameter $\beta \in \mathbb{R}$ and use the sigmoid function $\sigma(\cdot)$ to map it into $(0,1)$. We then use this factor to scale $\alpha_{\max}$, and the final update rule for $y$ is 
\begin{equation}\label{eq:update rule}
    y^{\mathrm{new}}=y^{\mathrm{old}}+\sigma(\beta)\alpha_{\max} \cdot d.
\end{equation}

In addition, if the descent direction $d$ obtained from the Descent-Net is the optimal solution of Problem~(\ref{eq:subproblem-1}), Lemma~\ref{lem:exact_penalty_l2} ensures that a fixed step size of $\alpha = 1/M$ is feasible. However, we found that such a fixed step size does not perform well in practice, and in the appendix \ref{exp:step size} we provide a comparison of different step-size selection strategies.

\subsection{Descent-Net}

Our Descent-Net consists of $S$ Descent Modules. The input of the network is an initial feasible solution $y_0$ of Problem~\eqref{prob:constraint}. At each stage, the $s$-th module takes the gradient information at the current iterate $y_s$ and outputs a descent direction $d_s$, which is then used to update the solution to $y_{s+1}$ according to the update rule~\eqref{eq:update rule}. By repeatedly updating the solution in this manner, the network finally produces a high-accuracy feasible solution $y_S$. The overall procedure of the proposed method is summarized in Algorithm~\ref{alg:training}.

\begin{algorithm}[H]
\caption{Descent-Net}
\label{alg:training}
\begin{algorithmic}[1]
\State \textbf{Input:} initial feasible point $y_0\in\mathcal{C}$, $S$ modules and $K$ layers in each module.
\State \textbf{Learnable parameters:} $\Theta := \{\mathbf{V}^k, \mathbf{W}^k, \mathbf{b}_1^k,\mathbf{b}_2^k,\gamma_k\}_{k=0,1,\cdots, K-1}$ and $\{\beta_s\}_{s=0,\cdots, S-1}.$
% \State \textbf{Initialize:} Feasible point $y_0\in\mathcal{C}.$
\For{$s=0, 1,\cdots, S-1$}
\State $d_0=-\nabla f_x(y_s)$
\For{$k =0, \cdots, K-1$}
    \State $\mathbf{u}_k=\nabla f_x(y_s) +\sum_{j=1}^lc_j\textbf{1}_{\{\langle d_k,\nabla g_j(y_s)\rangle\geqslant -M g_{x,j}(y)\}}\nabla g_j(y_s)$
    \State $d_{k+1}=\mathcal{P}\left(d_k-\gamma_kT^k(\mathbf{u}_k)\right)$, where $\mathcal{P}$ is defined in (\ref{2}) and $T^k$ is defined in (\ref{7}) with parameters $\{\mathbf{V}^k, \mathbf{W}^k, \mathbf{b}_1^k,\mathbf{b}_2^k\}$
\EndFor
% \State Obtain a  descent direction $d_K$
\State $y_{s+1}= y_s+ \sigma(\beta_s)\cdot \alpha_s d_K$ as defined by \eqref{eq:update rule}, where $\alpha_s$ is obtained by \eqref{eq:alpha}.
\EndFor
\State Train the parameters with loss: $\ell_p(y) = f_x(y) + \lambda_g \| \text{ReLU}(g_x(y)) \|_1 + \lambda_h \| h_x(y) \|_1$
\State \textbf{Output:} $y_S.$
\end{algorithmic}
\end{algorithm}

\begin{figure}[H]
  \centering
  \includegraphics[width=0.9\linewidth]{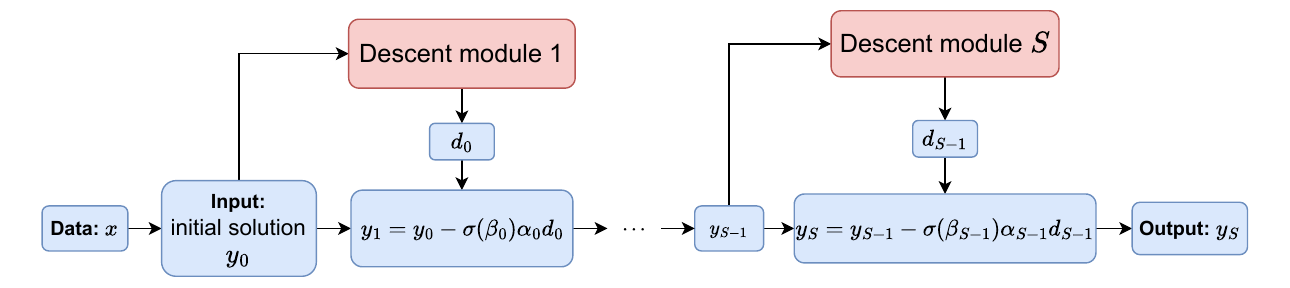} 
  \caption{Architecture of the entire network.}
  \label{descent-net}
\end{figure}

\begin{theorem}[global convergence of the Descent-Net]
\label{thm:desnet-conv}
Suppose the Assumptions ~\ref{assump:close-bounded},~\ref{assump:linear_indep_eq},~\ref{assump:licq} hold. In addition, assume that $h_x, g_x$ are linear. Then there exists $K_\varepsilon$-layer Descent-Module with a specific parameter assignment independent of $x$ and $S>0$ such that   the Descent-Net generates a  KKT   conditions of the   problem \eqref{prob:constraint}.
\end{theorem}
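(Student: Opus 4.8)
The plan is to read the Descent-Net as an inexact realization of the Topkis--Veinott uniformly feasible direction scheme and then adapt the classical convergence argument for that method. First I would establish feasibility of the whole trajectory $\{y_s\}$. Because $h_x,g_x$ are linear, the projection $\mathcal{P}$ of Proposition~\ref{proj} forces every module output $d_s$ to satisfy $\nabla h_x(y_s)^\top d_s=0$, so the update \eqref{eq:update rule} leaves all equality constraints exactly satisfied, while the cap $\alpha_{\max}$ from \eqref{eq:alpha} guarantees $g_{x,j}(y_{s+1})\leqslant 0$ for every $j$. Hence $y_s\in\mathcal{C}$ for all $s$, and once monotone decrease of $f_x$ is in place, Assumption~\ref{assump:close-bounded} confines the iterates to the bounded sublevel set $\{y\in\mathcal{C}\mid f_x(y)\leqslant f_x(y_0)\}$.

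Next I would quantify how accurately each module solves the direction-finding subproblem. Choosing $c_j$ as in \eqref{eq:cj} triggers Lemma~\ref{lem:exact_penalty_l2}, so the global minimizer $d_s^{*}$ of the penalized problem \eqref{eq:subproblem-1} at $y_s$ is exactly the \eqref{eq:Topkis-2} direction. By Lemma~\ref{lm2} there is a fixed ($x$-independent) parameter assignment for which the module reproduces the projected-subgradient iterates, and by Lemma~\ref{lm1}/Theorem~\ref{th} a $K_\varepsilon$-layer module then returns a $d_s$ whose subproblem objective is within $\varepsilon$ of optimal. I would introduce the stationarity measure
\[
\theta(y):=\min_{d\in F}\,\nabla f_x(y)^\top d,
\]
the optimal value of \eqref{eq:Topkis-2} (with $F$ evaluated at $y$). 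Since $0\in F$ we have $\theta(y)\leqslant 0$, a standard Farkas-type argument under the LICQ Assumption~\ref{assump:licq} shows $\theta(y)=0$ if and only if $y$ is a KKT point of \eqref{prob:constraint}, and $\theta$ is continuous on the compact feasible region by Assumption~\ref{assump:Uniform_boundedness}. This identifies $\theta(y_s)\to 0$ as the correct convergence target.

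The core is a uniform sufficient-decrease estimate. For the exact UFD direction the relaxed constraints $\langle a_j,d_s^{*}\rangle\leqslant -Mg_{x,j}(y_s)$ force $\alpha_{\max,s}\geqslant 1/M$, while $\alpha_{\max,s}$ stays bounded above by some $\bar\alpha$ since $y_s\in Y$ with $Y$ compact and $\|d_s\|_2\leqslant 1$. Writing $t_s:=\sigma(\beta_s)\alpha_{\max,s}$ and using that $\nabla_y f_x$ is uniformly continuous on $\mathcal{X}\times Y$ (Assumption~\ref{assump:Uniform_boundedness}) with modulus $\omega$, a first-order expansion gives
\[
f_x(y_{s+1})-f_x(y_s)\leqslant t_s\,\nabla f_x(y_s)^\top d_s+t_s\,\omega(t_s)\leqslant t_s\bigl(\theta(y_s)+\omega(t_s)\bigr).
\]
Fixing $\sigma(\beta_s)\equiv\sigma_0$ small makes $t_s\leqslant\sigma_0\bar\alpha$ uniformly small, so $\omega(t_s)\leqslant\tau/2$ with $\tau\downarrow 0$ as $\sigma_0\downarrow 0$; then every stage with $\lvert\theta(y_s)\rvert>\tau$ decreases $f_x$ by at least $\rho:=\sigma_0\tau/(2M)>0$. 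Since $f_x$ is bounded below on the sublevel set (Assumption~\ref{assump:close-bounded}), telescoping shows only finitely many stages can satisfy $\lvert\theta(y_s)\rvert>\tau$, so some $y_s$ has $\lvert\theta(y_s)\rvert\leqslant\tau$. As $\tau$ depends only on the uniform constants and not on $x$, for any prescribed accuracy we obtain, uniformly over $x\in\mathcal{X}$, a finite $S$ and an iterate that is $\tau$-approximately stationary; the characterization of $\theta$ then makes it an approximate KKT point.

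The main obstacle is the interplay between the inexactness of the module and the step rule. Theorem~\ref{th} controls only the subproblem objective $g(d_s)$, not $\|d_s-d_s^{*}\|$, and \eqref{eq:subproblem-1} is convex but piecewise linear, so an $O(\varepsilon)$ objective gap need not yield an $O(\varepsilon)$ direction gap. I would sidestep this by working with $\nabla f_x(y_s)^\top d_s$ directly: on $F$ the penalty vanishes and $g(d)=\nabla f_x(y_s)^\top d+\sum_j c_j b_j$, so an $\varepsilon$-optimal \emph{feasible} output already satisfies $\nabla f_x(y_s)^\top d_s\leqslant\theta(y_s)+\varepsilon$, which is exactly what the decrease estimate consumes. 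The residual difficulty is that the module output may be slightly infeasible for \eqref{eq:Topkis-2}; I would absorb this into a small, uniform relaxation of both $\alpha_{\max,s}$ and the stationarity threshold, all controlled by the same uniform constants of Assumption~\ref{assump:Uniform_boundedness}. Finally, verifying that a single $x$-independent choice of $\sigma_0$ (rather than an instance-dependent line search) suffices is precisely what forces the decrease-while-$\lvert\theta\rvert$-large formulation above instead of an exact $\theta(y_s)\to 0$ limit; balancing $\varepsilon$ and $\tau$ against the target accuracy then fixes $K_\varepsilon$ and $S$.
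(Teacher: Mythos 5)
Your proposal is correct in its essentials but takes a genuinely different route from the paper. The paper proves the theorem by reduction: it asserts (via Lemma~\ref{lem:exact_penalty_l2} and Theorem~\ref{th}) that it suffices to analyze an idealized ``UFD--penalty'' algorithm in which the subproblem~\eqref{eq:pen} is solved \emph{exactly} at each iterate and the step is an exact line search over $(0,1/M]$, and then shows (Theorem~\ref{thm:ufd-conv}) that every accumulation point of that algorithm is a Fritz--John point --- a subsequence-plus-contradiction argument powered by Lemma~\ref{lem:farkas-descent} --- which LICQ upgrades to KKT. The $\varepsilon$-inexactness of the Descent-Module and the network's actual step rule $\sigma(\beta)\alpha_{\max}$ never re-enter that analysis. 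You instead keep both throughout: you introduce the stationarity measure $\theta(y)=\min_{d\in F}\nabla f_x(y)^\top d$, identify $\theta(y)=0$ with KKT under LICQ (the same Farkas content as Lemma~\ref{lem:farkas-descent}), and run a sufficient-decrease/counting argument showing that only finitely many stages can have $\lvert\theta(y_s)\rvert>\tau$, yielding explicit finite $K_\varepsilon$ and $S$ with all constants uniform in $x$. What each approach buys: the paper obtains exact KKT at accumulation points, but only for the idealized algorithm, and its asymptotic argument never actually produces the finite $S$ that the theorem statement promises; your argument delivers precisely the finite-$(K_\varepsilon,S)$, uniform-in-$x$ conclusion, at the honest price of approximate rather than exact stationarity --- which is all a finite network can guarantee anyway. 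Two points in your sketch should be firmed up: (i) the continuity of $\theta$ is delicate for parametric value functions and is in fact not needed, since your counting argument uses only the per-stage decrease bound and the Farkas characterization, so you should drop it; (ii) the ``absorb the infeasibility'' step does go through, because the strict margin $c_{\min}-L_f/(M\delta_g)>0$ in the proof of Lemma~\ref{lem:exact_penalty_l2} yields the violation bound $\sum_j\pos{\scp{a_j}{d_s}-b_j}=O(\varepsilon)$ for any $\varepsilon$-optimal point of~\eqref{eq:pen}, whence $\alpha_{\max,s}\geqslant 1/M-O(\varepsilon)$ and the slack in the descent inequality is $O(\varepsilon)$, with constants uniform over $\mathcal{X}$; this should be stated and proved explicitly rather than asserted.
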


To prove Theorem~\ref{thm:desnet-conv}, we require several technical 
definitions and lemmas, which we present below before proceeding to the proof.

\begin{definition}[Fritz–John point]
Let $y \in \mathbb{R}^n$ be a feasible point for the problem
\[
\min f(y) \quad \mathrm{s.t.} \quad h_i(y)=0,\; g_j(y)\leqslant 0.
\]
Then $y$ is called a \emph{Fritz–John point} if there exist multipliers
$\lambda_0 \geqslant 0$, $\lambda_j \geqslant 0$ for $j=1,\cdots,l$,
and $\mu_i \in \mathbb{R}$ for $i=1,\cdots,m$, not all zero, such that
\begin{align*}
\lambda_0 \nabla f(y) + \sum_{j=1}^l \lambda_j \nabla g_j(y)
+ \sum_{i=1}^m \mu_i \nabla h_i(y) &= 0, \\
\lambda_j \cdot g_j(y) &= 0,\quad j=1,\cdots,l.
\end{align*}
\end{definition}

\begin{definition}[KKT point]
A feasible point $y \in \mathbb{R}^n$ is called a \emph{Karush–Kuhn–Tucker (KKT) point}
if there exist multipliers $\lambda_j \geqslant 0$ and $\mu_i \in \mathbb{R}$ such that
\begin{align*}
\nabla f(y) + \sum_{j=1}^l \lambda_j \nabla g_j(y)
+ \sum_{i=1}^m \mu_i \nabla h_i(y) &= 0, \\
\lambda_j \cdot g_j(y) &= 0,\quad j=1,\cdots,l.
\end{align*}
\end{definition}
If the LICQ condition holds at $\bar{y}$, then the Fritz-John point is also a KKT point.

\begin{lemma}[Farkas Lemma with equality constraints]
\label{lem:farkas-equality}
Let \(A \in \mathbb{R}^{m \times n}\), \(B \in \mathbb{R}^{p \times n}\),
and \(b \in \mathbb{R}^m\).

Then exactly one of the following two systems has a solution:

\begin{enumerate}[label=(\alph*)]
\item There exists \(x \in \mathbb{R}^n\) such that
\[
A x < b, \quad B x = 0.
\]

\item There exists \((\lambda, \mu) \in \mathbb{R}^m \times \mathbb{R}^p\), not both zero, such that
\[
\lambda \geqslant 0,\quad
A^\top \lambda + B^\top \mu = 0,\quad
\lambda^\top b \leqslant 0.
\]
\end{enumerate}
Moreover, both systems cannot be simultaneously feasible.
\end{lemma}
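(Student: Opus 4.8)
The plan is to prove the two assertions in turn: the mutual exclusivity (both systems cannot hold) and the genuine alternative (failure of (a) forces (b)), reducing the latter to the standard non-strict Farkas lemma. For mutual exclusivity I would pair the stationarity relation of (b) against a solution $x$ of (a): if $Ax < b$, $Bx = 0$ and $A^\top\lambda + B^\top\mu = 0$ with $\lambda \geqslant 0$, then
\[
0 = (A^\top\lambda + B^\top\mu)^\top x = \lambda^\top(Ax) + \mu^\top(Bx) = \lambda^\top(Ax),
\]
since $Bx = 0$. When $\lambda \neq 0$, combining $\lambda \geqslant 0$ with the strict inequality $Ax < b$ gives $\lambda^\top(Ax) < \lambda^\top b \leqslant 0$, contradicting $\lambda^\top(Ax) = 0$. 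The only loophole is the degenerate certificate $\lambda = 0$, $\mu \neq 0$ with $B^\top\mu = 0$; I would rule this out via Assumption~\ref{assump:linear_indep_eq}, under which $B = \nabla h_x(y)^\top$ has full row rank, so $B^\top\mu = 0$ forces $\mu = 0$, and hence ``$(\lambda,\mu)$ not both zero'' is equivalent to ``$\lambda \neq 0$.'' (Indeed, if $B$ were rank-deficient one could satisfy (b) trivially with $\lambda = 0$ and any nonzero $\mu \in \ker B^\top$, so the ``exactly one'' dichotomy genuinely requires this full-rank hypothesis.)

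For the substantive direction I would assume (a) is infeasible and homogenize the strict inequalities so that the standard Farkas lemma applies. Introducing an auxiliary scalar $t$, the system $Ax < b$, $Bx = 0$ is solvable if and only if
\[
\begin{pmatrix} -A & b \\ 0 & 1\end{pmatrix}\begin{pmatrix} x \\ t\end{pmatrix} > 0, \qquad \begin{pmatrix} B & 0\end{pmatrix}\begin{pmatrix} x \\ t\end{pmatrix} = 0
\]
has a solution $(x,t)$: any such solution has $t > 0$ and $x/t$ solves (a), while $(x,1)$ solves the homogeneous system whenever $x$ solves (a). Writing this as $Mz > 0$, $Nz = 0$, I would observe that such a strict system is solvable exactly when $Mz \geqslant \mathbf{1}$, $Nz = 0$ is solvable (rescale $z$ by $1/\min_i (Mz)_i$). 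This last system is non-strict, so by Farkas it is infeasible precisely when there exist $\eta \geqslant 0$ and free $\nu$ with $M^\top\eta + N^\top\nu = 0$ and $\mathbf{1}^\top\eta > 0$; as $\eta \geqslant 0$, the condition $\mathbf{1}^\top\eta > 0$ is just $\eta \neq 0$.

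The final step is to translate this certificate back into the variables of the lemma. Partitioning $\eta = (\lambda, s)$ with $\lambda \in \mathbb{R}^m$ and $s \in \mathbb{R}$ (the multiplier of the row $t > 0$) and setting $\nu = \mu$, the relation $M^\top\eta + N^\top\nu = 0$ expands to $-A^\top\lambda + B^\top\mu = 0$ and $b^\top\lambda + s = 0$, while $\eta \geqslant 0$ gives $\lambda \geqslant 0$ and $s \geqslant 0$. Substituting $s = -\lambda^\top b$ converts $s \geqslant 0$ into $\lambda^\top b \leqslant 0$, and the condition $\eta \neq 0$ forces $\lambda \neq 0$ (if $\lambda = 0$ then $s = -\lambda^\top b = 0$, so $\eta = 0$). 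Replacing $\mu$ by $-\mu$, whose sign is free, yields $A^\top\lambda + B^\top\mu = 0$, which is exactly system (b) with $\lambda \neq 0$. Combined with the mutual-exclusivity argument, this establishes the exact alternative.

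The main obstacle I anticipate is the strict inequality in (a): the textbook Farkas lemma is stated for non-strict systems, so the crux is the homogenization-and-rescaling reduction, which is precisely the step that converts the strict feasibility question into a non-strict one and, as a bonus, produces the sharp certificate with $\lambda \neq 0$ rather than merely $(\lambda,\mu) \neq 0$. I would also be careful to flag that the lemma's hypothesis ``$(\lambda,\mu)$ not both zero'' is only equivalent to the sharp condition $\lambda \neq 0$—and the ``exactly one'' dichotomy only holds—when $B$ has full row rank, which is guaranteed in this paper by Assumption~\ref{assump:linear_indep_eq}.
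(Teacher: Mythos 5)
Your proof is correct, and it is worth noting that the paper itself offers no proof of this lemma at all: it is stated as a known transposition-type alternative (a nonhomogeneous Motzkin variant) and used as a black box in the proof of Lemma~\ref{lem:farkas-descent}. Your blind reconstruction is therefore a genuine supplement rather than a parallel route. The two halves of your argument are both sound: the pairing $0=(A^\top\lambda+B^\top\mu)^\top x=\lambda^\top(Ax)$ against $\lambda^\top(Ax)<\lambda^\top b\leqslant 0$ (strict because $\lambda\geqslant 0$, $\lambda\neq 0$, and $Ax<b$ componentwise) settles exclusivity, and the homogenization $\bigl(\begin{smallmatrix}-A & b\\ 0 & 1\end{smallmatrix}\bigr)\bigl(\begin{smallmatrix}x\\ t\end{smallmatrix}\bigr)>0$, $Bx=0$, followed by the rescaling that replaces $Mz>0$ by $Mz\geqslant \mathbf{1}$, correctly reduces the strict system to the standard non-strict Farkas lemma; unwinding the certificate $(\lambda,s,\mu)$ with $s=-\lambda^\top b\geqslant 0$ indeed yields the sharp conclusion $\lambda\neq 0$, not merely $(\lambda,\mu)\neq 0$. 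Most valuable is your caveat, which is in fact a correction to the lemma as printed: with "not both zero" the dichotomy is false when $B$ lacks full row rank, since $\lambda=0$, $\mu\in\ker B^\top\setminus\{0\}$ satisfies (b) trivially even when (a) is feasible (e.g., duplicated rows in $B$ give an explicit counterexample). The statement should either assert $\lambda\neq 0$ in (b) or assume $\operatorname{rank}(B)=p$; in the paper's application this is harmless, because $B=\nabla h_x(\bar y)^\top$ has full row rank by Assumption~\ref{assump:linear_indep_eq}, exactly as you observe, but the lemma as a free-standing statement needs your patch.
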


We now show that the problem \eqref{eq:UFD} has negative value if $y$ is not a Fritz-John point. 
\begin{lemma}[Descent direction under failure of Fritz--John]
\label{lem:farkas-descent}
Let $\bar y \in \mathcal{C}$ be a feasible point. Suppose that the set of vectors
$$
\left\{ v = \lambda_0 \nabla f_x(\bar y) + \sum_{j=1}^l \lambda_j \nabla g_j(\bar y) 
+ \sum_{i=1}^m \mu_i \nabla h_i(\bar y)
\:\middle\vert\:
\lambda_0 \geqslant 0,\ \lambda_j \geqslant 0,\ (\lambda,\mu) \ne 0,\ \lambda_j g_j(\bar y) = 0 \right\}
$$
does not contain the zero vector. That is, $\bar y$ is not a Fritz--John point.

Then there exists a vector $d \in \mathbb{R}^n$ such that:
\begin{itemize}
    \item \(\nabla h_x(\bar y)^\top d = 0\),
    \item \(\nabla f_x(\bar y)^\top d < 0\),
    \item \(\nabla g_j(\bar y)^\top d < -M g_j(\bar y)\) for all $j=1,\cdots,l$,
    \item \(\|d\|_2 \leqslant 1\).
\end{itemize}
\end{lemma}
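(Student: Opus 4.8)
The plan is to recognize the statement as exactly one half of a Farkas alternative: the failure of the Fritz--John condition at $\bar y$ is precisely the infeasibility of the dual (multiplier) system, and Lemma~\ref{lem:farkas-equality} then produces a strictly feasible primal direction, which I rescale to meet the norm bound and the inactive-constraint inequalities. First I would use complementary slackness to strip the Fritz--John system down to the active constraints. Writing $\mathcal{A} := \{ j : g_{x,j}(\bar y) = 0 \}$ for the active set, the defining condition $\lambda_j g_{x,j}(\bar y) = 0$ forces $\lambda_j = 0$ whenever $g_{x,j}(\bar y) < 0$. Hence ``$\bar y$ is not a Fritz--John point'' is equivalent to the nonexistence of a nonzero tuple $(\lambda_0,(\lambda_j)_{j\in\mathcal{A}},\mu)$ with $\lambda_0 \geqslant 0$, $\lambda_j \geqslant 0$, and
\[
\lambda_0 \nabla f_x(\bar y) + \sum_{j \in \mathcal{A}} \lambda_j \nabla g_{x,j}(\bar y) + \sum_{i=1}^m \mu_i \nabla h_{x,i}(\bar y) = 0 .
\]

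Next I would instantiate Lemma~\ref{lem:farkas-equality} with $b = 0$. Let $A$ be the matrix whose rows are $\nabla f_x(\bar y)^\top$ and $\nabla g_{x,j}(\bar y)^\top$ for $j \in \mathcal{A}$, and let $B$ have rows $\nabla h_{x,i}(\bar y)^\top$. With $b = 0$ the requirement $\lambda^\top b \leqslant 0$ is automatic, so alternative (b) coincides verbatim with the multiplier system displayed above (under $\lambda = (\lambda_0,(\lambda_j)_{j\in\mathcal{A}})$). Since $\bar y$ is assumed not to be a Fritz--John point, system (b) is infeasible, and the lemma therefore forces alternative (a) to hold: there exists $\hat d \in \mathbb{R}^n$ with
\[
\nabla f_x(\bar y)^\top \hat d < 0, \qquad \nabla g_{x,j}(\bar y)^\top \hat d < 0 \ \ (j \in \mathcal{A}), \qquad \nabla h_x(\bar y)^\top \hat d = 0 .
\]

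Finally I would recover the full claim by setting $d := t\,\hat d$ for a sufficiently small $t \in (0,1]$. The objective-descent and equality conditions are homogeneous in the direction, so $\nabla f_x(\bar y)^\top d < 0$ and $\nabla h_x(\bar y)^\top d = 0$ survive for every $t > 0$, and for active indices $\nabla g_{x,j}(\bar y)^\top d = t\,\nabla g_{x,j}(\bar y)^\top \hat d < 0 = -M g_{x,j}(\bar y)$. For each inactive index the right-hand side $-M g_{x,j}(\bar y)$ is a fixed strictly positive number while $\nabla g_{x,j}(\bar y)^\top d = t\,\nabla g_{x,j}(\bar y)^\top \hat d \to 0$ as $t \to 0^+$, so the strict inequality holds once $t$ is small; choosing in addition $t \leqslant 1/\|\hat d\|_2$ enforces $\|d\|_2 \leqslant 1$. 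Taking $t$ below all these finitely many thresholds gives the desired $d$. The main obstacle here is bookkeeping rather than depth: one must align the Farkas data so that the ``not both zero'' clause of (b) matches the Fritz--John ``not all zero'' clause including the objective multiplier $\lambda_0$, and must check that a single scaling factor simultaneously handles the homogeneous strict inequalities, the inactive constraints with strictly positive slack, and the unit-norm bound.
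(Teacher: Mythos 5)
Your proposal is correct, and it rests on the same key tool as the paper — Lemma~\ref{lem:farkas-equality} — but you instantiate it differently, and the difference is worth noting. The paper feeds the lemma the \emph{inhomogeneous} data: $A$ contains $\nabla f_x(\bar y)^\top$ and \emph{all} $l$ rows $\nabla g_j(\bar y)^\top$, with right-hand side $b = (0, -Mg_1(\bar y),\dots,-Mg_l(\bar y))^\top$. There, the complementarity clause $\lambda_j g_j(\bar y)=0$ of the Fritz--John system corresponds to the condition $\lambda^\top b \leqslant 0$ in alternative (b) (since $\lambda \geqslant 0$ and $b \geqslant 0$ force $\lambda_j b_j = 0$ for every $j$ — a step the paper leaves implicit), and alternative (a) then hands over the inequalities $\nabla g_j(\bar y)^\top d < -Mg_j(\bar y)$ for all $j$ at once; shrinking $d$ into the unit ball preserves them because $b \geqslant 0$. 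You instead dispose of complementarity \emph{before} invoking the lemma, by restricting to the active set, and then apply the lemma homogeneously ($b=0$, i.e., a Gordan/Motzkin-type alternative); this makes the equivalence between ``not Fritz--John'' and infeasibility of system (b) completely transparent, but it only yields strict inequalities for the \emph{active} constraints, so you need the additional small-$t$ limiting argument to pick up the inactive constraints from their strictly positive slack $-Mg_j(\bar y)>0$ and to satisfy the norm bound. Both routes are sound; yours trades the paper's one-shot extraction of all the inequalities for a cleaner dual-side bookkeeping plus a finite-thresholds scaling step, and your explicit verification that a single $t$ handles the homogeneous inequalities, the inactive slacks, and $\|d\|_2 \leqslant 1$ is exactly the argument the paper compresses into its brief ``we can scale $d$'' remark.
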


\begin{proof}
Let \(H := [\nabla h_1(\bar y), \cdots, \nabla h_m(\bar y)] \in \mathbb{R}^{n \times m}\), and define the subspace of directions satisfying the linearised equality constraints:
\[
\mathcal{T} := \left\{ d \in \mathbb{R}^n \mid H^\top d = 0 \right\}.
\]

Let \(A \in \mathbb{R}^{(l+1) \times n}\) be the matrix whose rows are:
\[
A := \begin{bmatrix}
\nabla f_x(\bar y)^\top \\
\nabla g_1(\bar y)^\top \\
\vdots \\
\nabla g_l(\bar y)^\top
\end{bmatrix},
\qquad
b := \begin{bmatrix}
0 \\
- M g_1(\bar y) \\
\vdots \\
- M g_l(\bar y)
\end{bmatrix}.
\]

Then we consider the system:
\[
A d < b, \quad \text{subject to } H^\top d = 0.
\]

Since \(\bar y\) is not a Fritz–John point, the system of equalities
\[
\lambda_0 \nabla f_x(\bar y)
+ \sum_j \lambda_j \nabla g_j(\bar y)
+ \sum_i \mu_i \nabla h_i(\bar y) = 0
\quad\text{with } \lambda_0\geqslant0,\ \lambda_j\geqslant0, \mu_i\ \text{not all zero},
\]
has no solution satisfying the complementarity condition
\(\lambda_j g_j(\bar y)=0\).

Therefore, by the  Farkas Lemma \ref{lem:farkas-equality}, the dual system:
\[
\text{find } d \in \mathcal{T} \text{ such that } A d < b
\]
is feasible.

Because \(A, b\) are fixed and \(b_j = - M g_j(\bar y)\geqslant 0\), which is finite for all \(j=1,\cdots,l\), and \(\mathcal{T}\) is a linear subspace, the feasible set is convex and open in \(\mathcal{T}\). We can scale \(d\) such that \(\|d\|_2 \leqslant 1\).

Hence, such a direction \(d\) exists satisfying the claimed conditions.
\end{proof}

By Lemma~\eqref{lem:exact_penalty_l2} and Theorem~\ref{th}, to establish 
Theorem~\ref{thm:desnet-conv} it suffices to show that if the following 
algorithm—constructed from the subproblem~\eqref{eq:pen}—converges to a KKT 
point. We therefore begin by analyzing the convergence of the algorithm stated below.

\paragraph{Algorithm (UFD–penalty method).}
Given a feasible starting point $y_{0}\in\mathcal{C}$, repeat for $k=0,1,\cdots$
\begin{enumerate}[label=\arabic*.]
\item
 With the condition \eqref{eq:lambda-cond-main} holds, solve the sub–problem \eqref{eq:pen} 
  at the current iterate $y_k$ and obtain a minimizer $d_k$.
\item
  Update \quad $y_{k+1}\;=\;y_k+\alpha_k d_k$, where    
  $\alpha_k:=\arg\min_{\alpha}\{ f_x(y_k+\alpha d_k)\mid \alpha \in (0,1/M]\}$,
      \hfill(\emph{Lemma \ref{lem:exact_penalty_l2} implies
      $y_{k+1}\in\mathcal{C}$})
\end{enumerate}
We have the following result, which is similar to the  Topkis--Veinott method \cite{zoutendijk1960methods,faigle2013algorithmic}.
\begin{theorem}[global convergence of the UFD–$L_{2}$ method]
\label{thm:ufd-conv}
Suppose the Assumption \ref{assump:close-bounded}~\ref{assump:linear_indep_eq} and \ref{assump:licq} hold. Furthermore, we assume
that the gradient $\nabla f_x(y)$ is $L-$Lipschitz continuous, and $h_x, g_x$ are linear. Then every accumulation point $\bar y$ of the sequence
$\{y_k\}$ generated by the UFD-penalty  algorithm
satisfies the KKT   conditions of the   problem \eqref{prob:constraint}.
\end{theorem}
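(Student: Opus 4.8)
The plan is to treat the optimal value of the direction-finding subproblem as a merit function and drive it to zero. For a feasible $y$ define
\[
\theta(y) := \min_{d}\bigl\{ \nabla f_x(y)^\top d \;:\; \nabla h_x(y)^\top d = 0,\ \nabla g_{x,j}(y)^\top d \leqslant -M g_{x,j}(y)\ (j=1,\dots,l),\ \|d\|_2\leqslant 1 \bigr\}.
\]
Since $d=0$ is always feasible for this program when $y\in\mathcal{C}$ (the requirement $0\leqslant -Mg_{x,j}(y)$ holds because $g_{x,j}(y)\leqslant 0$), we have $\theta(y)\leqslant 0$ for all $y\in\mathcal{C}$. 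By Lemma~\ref{lem:exact_penalty_l2}, under \eqref{eq:lambda-cond-main} the minimizer $d_k$ of \eqref{eq:pen} also solves \eqref{eq:UFD}, hence $\theta(y_k)=\nabla f_x(y_k)^\top d_k$. The argument then splits into three parts: (i) show $\theta(y_k)\to 0$ along the algorithm; (ii) show that an accumulation point with $\theta=0$ is a Fritz--John point; (iii) upgrade Fritz--John to KKT via LICQ.

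First I would verify feasibility of the iterates and a sufficient-decrease inequality. Because $g_x,h_x$ are affine, $\langle\nabla g_{x,j},d_k\rangle\leqslant -Mg_{x,j}(y_k)$ gives $g_{x,j}(y_k+\alpha d_k)=(1-\alpha M)g_{x,j}(y_k)\leqslant 0$ for $\alpha\in(0,1/M]$, while $\langle\nabla h_{x,i},d_k\rangle=0$ preserves the equalities, so $y_{k+1}\in\mathcal{C}$. Using the $L$-Lipschitz gradient (descent lemma) and $\|d_k\|_2\leqslant 1$,
\[
f_x(y_k+\alpha d_k)\leqslant f_x(y_k)+\alpha\,\theta(y_k)+\tfrac{L}{2}\alpha^2 ,
\]
and since $\alpha_k$ minimizes the left side exactly over $(0,1/M]$, a short case analysis on whether the unconstrained minimizer $-\theta(y_k)/L$ lies in $(0,1/M]$ yields
\[
f_x(y_k)-f_x(y_{k+1})\ \geqslant\ \min\!\Bigl(\tfrac{\theta(y_k)^2}{2L},\ \tfrac{-\theta(y_k)}{2M}\Bigr)\ \geqslant\ 0 .
\]
By Assumption~\ref{assump:close-bounded} the iterates stay in a bounded sublevel set on which $f_x$ is bounded below, so $\{f_x(y_k)\}$ is nonincreasing and convergent; the telescoping sum of the decreases is finite, forcing the right-hand side to $0$ and hence $\theta(y_k)\to 0$. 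Boundedness also guarantees an accumulation point $\bar y$, with a subsequence $y_{k_i}\to\bar y$.

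The crux is to pass $\theta(y_{k_i})\to 0$ to the limit, i.e.\ to show $\theta(\bar y)=0$; equivalently (since $\theta\leqslant 0$) that $\theta$ is upper semicontinuous at $\bar y$. I would argue this by contradiction using the strict feasibility built into Lemma~\ref{lem:farkas-descent}. If $\bar y$ were \emph{not} a Fritz--John point, that lemma supplies a direction $\bar d$ with $\|\bar d\|_2\leqslant 1$, $\nabla h_x(\bar y)^\top\bar d=0$, $\nabla f_x(\bar y)^\top\bar d<0$, and the \emph{strict} inequalities $\langle\nabla g_{x,j}(\bar y),\bar d\rangle<-Mg_{x,j}(\bar y)$. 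Here linearity is decisive: the gradients $\nabla g_{x,j}$ and $\nabla h_{x,i}$ are constant, so $\bar d$ satisfies the equality constraints at every $y$, while the right-hand sides $-Mg_{x,j}(y)$ and the objective vector $\nabla f_x(y)$ depend continuously on $y$ (Assumption~\ref{assump:Uniform_boundedness}). Hence the strict inequalities and the strict objective decrease persist for all $y$ in a neighborhood of $\bar y$, giving a constant $\eta>0$ with $\theta(y)\leqslant\nabla f_x(y)^\top\bar d\leqslant-\eta$ there. This contradicts $\theta(y_{k_i})\to 0$, so $\bar y$ must be a Fritz--John point. I expect this persistence / upper-semicontinuity step to be the main obstacle, although the affine structure of the constraints keeps it elementary.

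Finally I would upgrade Fritz--John to KKT. At $\bar y$ there exist multipliers $(\lambda_0,\lambda,\mu)\neq 0$ with $\lambda_0\geqslant 0$, $\lambda_j\geqslant 0$, complementarity $\lambda_j g_{x,j}(\bar y)=0$, and $\lambda_0\nabla f_x(\bar y)+\sum_j\lambda_j\nabla g_{x,j}(\bar y)+\sum_i\mu_i\nabla h_{x,i}(\bar y)=0$. If $\lambda_0=0$, complementarity forces $\lambda_j=0$ on inactive constraints, leaving a nontrivial linear dependence among the active gradients $\{\nabla g_{x,j}(\bar y)\}_{j\in\mathcal{A}(\bar y)}$ and $\{\nabla h_{x,i}(\bar y)\}_{i=1}^m$, contradicting Assumption~\ref{assump:licq}. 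Thus $\lambda_0>0$, and dividing through by $\lambda_0$ produces valid KKT multipliers, proving that every accumulation point $\bar y$ satisfies the KKT conditions of \eqref{prob:constraint}.
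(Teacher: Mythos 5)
Your proof is correct, and while it shares the paper's overall skeleton---feasibility of the iterates, a contradiction built on Lemma~\ref{lem:farkas-descent}, and the final LICQ upgrade from Fritz--John to KKT---the mechanism of the central step is genuinely different and, in fact, tighter than the paper's. The paper argues at the subsequence level: assuming $\bar y$ is not Fritz--John, it transfers the strict descent direction of Lemma~\ref{lem:farkas-descent} to nearby iterates $y_{k_s}$, invokes a Taylor expansion with $o(t)$ remainders, and concludes that the objective ``would go to $-\infty$,'' without quantifying a per-iteration decrease that is uniformly bounded away from zero; as written, that last inference is informal, since the admissible step $t$ and the $o(t)$ terms could a priori degenerate along the subsequence. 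Your merit-function route closes exactly this gap: the descent lemma combined with the exact line search over $(0,1/M]$ yields the explicit inequality $f_x(y_k)-f_x(y_{k+1})\geqslant\min\bigl(\theta(y_k)^2/(2L),\,-\theta(y_k)/(2M)\bigr)$, whose telescoping forces $\theta(y_k)\to 0$ along the \emph{whole} sequence; then linearity of the constraints (constant gradients) makes the strictly feasible direction of Lemma~\ref{lem:farkas-descent} remain feasible with margin in a neighborhood of $\bar y$, giving $\theta\leqslant-\eta$ there and a clean contradiction with $\theta(y_{k_i})\to 0$. Notably, this is where the $L$-Lipschitz assumption on $\nabla f_x$---which the theorem states but the paper's proof never visibly exploits---actually does its work. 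Two minor caveats, neither of which damages your argument: the identification $\theta(y_k)=\nabla f_x(y_k)^\top d_k$ rests on Lemma~\ref{lem:exact_penalty_l2} and hence implicitly on Assumptions~\ref{assump:Uniform_boundedness} and~\ref{assump:margin} (the paper has the same hidden dependence through step 1 of the algorithm), and when $\theta(y_k)=0$ the infimum of $f_x(y_k+\alpha d_k)$ over $\alpha\in(0,1/M]$ need not be attained, so monotonicity there should be justified by continuity as $\alpha\to 0^{+}$ rather than by the descent inequality itself.
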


\begin{proof}
\textbf{(i) Feasibility and boundedness.}
The proof of Lemma~\ref{lem:exact_penalty_l2} shows that every $d_k$ satisfies
\(
\nabla g_j(y_k)^{\!\top}d_k\leqslant -M g_j(y_k)
\),
hence
\(
g_j(y_{k+1})=g_j(y_k)+\alpha\nabla g_j(y_k)^\top d_k\leqslant0,
\)
where $\alpha \in(0,1/M].$
Equality constraints are preserved by
$\nabla h_x(y_k)^\top d_k=0$, so $y_{k+1}\in\mathcal{C}$.
Because $\{y\in\mathcal{C}\mid f_x(y)\leqslant f_x(y_0)\}$ is bounded, $\{y_k\}$ is bounded and admits
convergent subsequences.
 
% \textbf{(ii) Fritz-John at every limit point.}
% Take a convergent subsequence $y_{k_r}\!\to\!\bar y$.
% If $\bar y$ were not a Fritz–John (hence not a KKT) point, 
% there would exist a usable direction
% $d\in F(\bar y)  :=\bigl\{d\in\mathcal D \mid
%                   \scp{\nabla g_{j}(\bar{y})}{d}\le -M g_{j}(\bar{y}),\;j=1,\dots,l\bigr\}$ with
% $\nabla f_x(\bar y)^\top d<0$ and
% \(\nabla g_j(\bar y)^\top d\le-Mg_j(\bar y)\).
 
\noindent\textbf{(ii) every accumulation point is a Fritz--John point.}

Let $\bar y$ be an accumulation point of $\{y_k\}$, extracted from a subsequence $\{y_{k_s}\}$. Suppose, for contradiction, that $\bar y$ is not a Fritz--John point.

Then, by Lemma~\ref{lem:farkas-descent}, there exist $z < 0$ and a direction $d \in \mathbb{R}^n$ satisfying:
\[
\|d\|_2 \leqslant 1, \quad \nabla h_x(\bar y)^\top d = 0,
\quad \nabla f_x(\bar y)^\top d < z < 0,
\quad \nabla g_j(\bar y)^\top d < -M g_j(\bar y) + z.
\]

Since $f_x$, $g_j$, and all gradients are continuous, and \(y_{k_s} \to \bar y\), there exists $\varepsilon > 0$ and $\delta > 0$ such that for all $s$ sufficiently large (i.e., \(\|y_{k_s} - \bar y\| < \delta\)):
\[
\begin{aligned}
\nabla f_x(y_{k_s})^\top d &< z + \varepsilon, \\
\nabla g_j(y_{k_s})^\top d &< -M g_j(y_{k_s}) + \varepsilon, \\
\nabla h_x(y_{k_s})^\top d &< \varepsilon.
\end{aligned}
\]

Fix \(\varepsilon := \lvert z\rvert/3 > 0\). Then for large \(s\), we obtain:
\[
\begin{aligned}
\nabla f_x(y_{k_s})^\top d &< z + \varepsilon =: \hat z < 0, \\
\nabla g_j(y_{k_s})^\top d &< -M g_j(y_{k_s}) + \varepsilon, \\
\nabla h_x(y_{k_s})^\top d &< \varepsilon.
\end{aligned}
\]

Now consider the solution \(d_{k_s}\) of the UFD subproblem \eqref{eq:Topkis-2} at \(y_{k_s}\), which satisfies:
\[
\|d_{k_s}\|_2 \leqslant 1, \quad
\nabla h_x(y_{k_s})^\top d_{k_s} = 0, \quad
\nabla g_j(y_{k_s})^\top d_{k_s} \leqslant -M g_j(y_{k_s}).
\]

Since \(d\) is a feasible direction and \(\nabla f_x(y_{k_s})^\top d < \hat z < 0\), it follows that the optimal value \(z_s := \nabla f_x(y_{k_s})^\top d_{k_s}\) must also satisfy:
\[
z_s < \hat z < 0.
\]

Thus, for all large \(s\), we have:
\[
\nabla f_x(y_{k_s})^\top d_{k_s} = z_s < 0, \quad
\nabla g_j(y_{k_s})^\top d_{k_s} < 0, \quad
\nabla h_x(y_{k_s})^\top d_{k_s} = 0.
\]

Now define \(y_{k_{s+1}} := y_{k_s} + t d_{k_s}\), where \(t > 0\) is small. Since \(d_{k_s}\) satisfies the linearized equality constraints exactly and inequality constraints strictly, Taylor expansion gives:
\[
\begin{aligned}
f_x(y_{k_s} + t d_{k_s}) &= f_x(y_{k_s}) + t \nabla f_x(y_{k_s})^\top d_{k_s} + o(t)
< f_x(y_{k_s}) + t z_s/2, \\
g_j(y_{k_s} + t d_{k_s}) &= g_j(y_{k_s}) + t \nabla g_j(y_{k_s})^\top d_{k_s} + o(t) < 0, \\
h_i(y_{k_s} + t d_{k_s}) &= h_i(y_{k_s}) + t \nabla h_i(y_{k_s})^\top d_{k_s} = 0.
\end{aligned}
\]

Therefore, for sufficiently small \(t > 0\), the updated point \(y_{k_{s+1}} := y_{k_s} + t d_{k_s}\) remains feasible and decreases the objective value. 

This contradicts the assumption that $\{f_x(y_k)\}$ converges to a finite value (since it would go to \(-\infty\)). Hence, our assumption must be false: every limit point $\bar y$ must satisfy the Fritz--John condition.

\medskip
\textbf{(iii) LICQ $\Rightarrow$ KKT.}
Under LICQ the Fritz–John multipliers have $\lambda_0>0$,
so the KKT system holds at $\bar y$.
\end{proof}

\section{Experiment}

We evaluate our Descent-Net on three types of problems: convex quadratic programs, a simple class of non-convex optimization problems, and the AC optimal power flow (AC-OPF) problem. To further evaluate the scalability of our method and to demonstrate its performance on a practical real-world instance of quadratic programming, we additionally include a large-scale portfolio optimization task. Detailed experimental settings are provided in Appendix~\ref{exp_details}.

\subsection{Baselines and Evaluation Criteria}

We compare our method against several benchmarks, including: 
\begin{itemize}
    \item \textbf{Optimizer}: Traditional solvers include \verb|OSQP| \cite{stellato2020osqp} and \verb|qpth| \cite{amos2017optnet} for convex QPs; the GPU-accelerated \verb|HPR-QP| \cite{chen2025hpr} and \verb|CuClarabel| \cite{CuClarabel}, both of which are executed on an H100 GPU in our experiments; \verb|IPOPT| \cite{wachter2006implementation} for general non-convex problems; and the \verb|PYPOWER| solver, a Python port of \verb|MATPOWER| \cite{zimmerman2005matlab}, for AC-OPF.
    \item \textbf{DC3}\cite{dontidc3}: The full DC3 framework that combines both completion and correction operators. 
    \item \textbf{Projection method}:Trains an MLP and projects its output onto the feasible set. For problems with linear constraints (convex QP and simple non-convex cases), the projection is solved using OptNet~\cite{amos2017optnet}. For the AC-OPF problem, the projection follows the differentiable solver of \cite{chen2021enforcing}.
    \item \textbf{Warm start}: The infeasible NN prediction is directly used as the warm-start for the optimizer of \cite{chen2021enforcing}, following the warm-starting schemes of \cite{diehl2019warm} and \cite{baker2019learning}.
    \item \textbf{CBWF}\cite{wu2025constraint}: Inspired by the classical active set method, this approach explores the boundaries around inequality constraints and updates the initial solution to obtain a better objective value.
\end{itemize}
Other approaches reported in the literature, such as simple neural network models trained with penalty functions, are not included due to poor constraint satisfaction.

The performance of all methods is assessed according to the following criteria:  
\begin{itemize}
    \item \textbf{Feasibility:} 
    measured by the average constraint violation of both equality and inequality constraints, i.e., $\frac{1}{m}\sum_{i=1}^{m} \lvert h_{x,i}(y) \rvert$ and $\frac{1}{l}\sum_{j=1}^{l} \mathrm{ReLU}\left(g_{x,j}(y)\right)$. 
    \item \textbf{Optimality:} measured by the average relative and absolute errors (in the $\ell_1$ norm) for both the solution and the objective value, where the optimal solution is approximated by optimizer.  
    \item \textbf{Efficiency:} the computational time. It is worth noting that \verb|OSQP|, \verb|HPR-QP|, \verb|CuClarabel|, \verb|IPOPT|, and \verb|PYPOWER| only support sequential solving. For these solvers, we report the average runtime per instance to approximate full parallelization, while for the other methods the runtime is measured as the average over batches, where each batch of test instances is solved in parallel. And for CBWF and Descent-Net, the reported runtime includes both the time to obtain the initial solution and the time spent on refining the solution.
\end{itemize}

\subsection{Convex quadratic programs}

We first consider convex QPs with quadratic objectives and linear constraints:
\begin{equation}\label{prob:qp}
    \min_{y \in \mathbb{R}^n}\:\frac{1}{2} y^T Q y + p^T y, \quad \text{s.t. } Ay = x, \; Gy \leqslant h,
\end{equation}
where $Q \in \mathbb{R}^{n \times n} \succeq 0$, $p \in \mathbb{R}^n$, $A \in \mathbb{R}^{n_{eq} \times n}$, $G \in \mathbb{R}^{n_{ineq} \times n}$, and $h \in \mathbb{R}^{n_{ineq}}$ are fixed. The input $x \in \mathbb{R}^{n_{eq}}$ varies across problem instances, and the goal is to approximate the optimal $y$ given $x$. 

We generated 10,000 instances of $x$ for three problem sizes, $n=100$, $n=1000$, and $n=5000$. For all settings, Descent-Net uses the solutions produced by the DC3 model as its initial points. For $n=100$, the experimental results are summarized in Table~\ref{tab:qp100}, and the final solutions obtained by Descent-Net attain a relative objective error of $3.1\times 10^{-4}$. Note that the runtime reported for \verb|OSQP| corresponds to the average time per instance, as it only supports sequential solving and is therefore less efficient than Descent-Net.

\begin{table}[htbp]
\caption{Results on the convex QP task evaluated on the test set with 833 samples, 
with problem dimension $n=100$, $n_{eq}=50$, and $n_{ineq}=50$. For methods that support parallel execution, the runtime corresponds to the time required to process one batch with a batch size of 833.}
\label{tab:qp100}
\centering
\begin{tabular}{lccccc}
\toprule
Method & ineq. vio. & eq. vio. & sol. rel. err. & obj. rel. err. & Time (s) \\
\midrule
OSQP & 0.0000 & 0.0000 & 0.0  & 0.0 & 0.0028 \\
HPR-QP & 0.0000 & 0.0000 & $2.3\times10^{-8}$ & $6.3\times10^{-9}$ & 0.1080 \\
CuClarabel & 0.0000 & 0.0000 & $1.6\times10^{-3}$ & $1.8\times10^{-5}$ & 0.0539\\
qpth & 0.0000 & 0.0000 & $2.4\times10^{-6}$ & $4.7\times10^{-10}$ & 1.7593 \\
Projection method & 0.0000 & 0.0000 & $3.2\times10^{-2}$ & $8.4\times10^{-4}$ & 0.2124 \\
CBWF & 0.0000 & 0.0000 & $2.1\times10^{-1}$ & $6.6\times10^{-2}$ & 0.0366 \\
DC3 & 0.0000 & 0.0000 & $5.2\times10^{-1}$ & $4.2\times10^{-1}$ & 0.0036 \\
\textbf{Descent-Net} & 0.0000 & 0.0000 & $1.2\times10^{-2}$ & $3.1\times10^{-4}$ & 0.0132 \\
\bottomrule
\end{tabular}
\end{table}

For the larger scales $n=1000$ and $n=5000$, the corresponding results are presented in Table~\ref{tab:qp1000} and Table~\ref{tab:qp5000}, respectively. Descent-Net achieves relative objective errors of $5.8\times 10^{-3}$ for $n=1000$ and $3.2\times 10^{-3}$ for $n=5000$. The lower accuracy compared to the $n=100$ case can be attributed to the fact that the matrices have condition numbers on the order of $n^2$, causing the underlying solution map to become increasingly ill-conditioned as $n$ grows and thus making the approximation task more challenging for the neural network.

Nonetheless, the speed advantage of Descent-Net becomes more pronounced as the problem scale increases. For a fair comparison, we also include \verb|HPR-QP fast| and \verb|CuClarabel fast|, where each solver is terminated once it reaches the same accuracy level as Descent-Net (a relative objective error of about $10^{-3}$). Both methods require more time than Descent-Net to attain this accuracy. In conclusion, our Descent-Net obtains a solution for large-scale QP problems with accuracy $10^{-3}$ that is about 30 times faster than the baseline solvers.

\begin{table}[htbp]
\caption{Results on the convex QP task evaluated on the test set with 833 samples, 
with problem dimension $n=1000$, $n_{eq}=500$, and $n_{ineq}=500$. For methods that support parallel execution, the runtime corresponds to the time required to process one batch with a batch size of 32.}
\label{tab:qp1000}
\centering
\begin{tabular}{lccccc}
\toprule
Method & ineq. vio. & eq. vio. & sol. rel. err. & obj. rel. err. & Time (s) \\
\midrule
OSQP & 0.0000 & 0.0000 & 0.0  & 0.0 & 2.2174 \\
HPR-QP & 0.0000 & 0.0000 & $2.3\times10^{-6}$ & $1.8\times10^{-9}$ & 0.2710 \\
HPR-QP fast & 0.0008 & 0.0092 & $2.4\times10^{-3}$ & $1.1\times10^{-3}$ & 0.2191 \\
CuClarabel & 0.0000 & 0.0000 & $5.6\times10^{-4}$ & $3.0\times10^{-7}$ & 0.3822\\
CuClarabel fast & 0.0000 & 0.0000 & $3.8\times10^{-2}$ & $4.7\times10^{-3}$ & 0.2058 \\
qpth & 0.0000 & 0.0000 & $2.7\times10^{-8}$ & $9.7\times10^{-11}$ & 117.3808 \\
DC3 & 0.0000 & 0.0000 & $9.5\times10^{-1}$ & $1.1$ & 0.0036 \\
\textbf{Descent-Net} & 0.0000 & 0.0000 & $2.9\times10^{-2}$ & $5.8\times10^{-3}$ & 0.0078 \\
\bottomrule
\end{tabular}
\end{table}

\vspace{-1em}

\begin{table}[htbp]
\caption{Results on the convex QP task evaluated on the test set with 833 samples, 
with problem dimension $n=5000$, $n_{eq}=500$, and $n_{ineq}=500$. For methods that support parallel execution, the runtime corresponds to the time required to process one batch with a batch size of 8.}
\label{tab:qp5000}
\centering
\begin{tabular}{lccccc}
\toprule
Method & ineq. vio. & eq. vio. & sol. rel. err. & obj. rel. err. & Time (s) \\
\midrule
OSQP & 0.0000 & 0.0000 & 0.0  & 0.0 & 107.6874 \\
HPR-QP & 0.0000 & 0.0000 & $5.8\times10^{-8}$ & $1.1\times10^{-11}$ & 0.9160 \\
HPR-QP fast & 0.1545 & 0.3811 & $8.7\times10^{-3}$ & $1.1\times10^{-3}$ & 0.5493 \\
CuClarabel & 0.0000 & 0.0000 & $6.1\times10^{-6}$ & $2.1\times10^{-9}$ & 0.4681 \\
CuClarabel fast & 0.0000 & 0.0000 & $1.3\times10^{-2}$ & $1.4\times10^{-3}$ & 0.3072 \\
DC3 & 0.0000 & 0.0000 & $9.8\times10^{-1}$ & $9.2\times10^{-1}$ & 0.0038 \\
\textbf{Descent-Net} & 0.0000 & 0.0000 & $1.7\times10^{-2}$ & $3.2\times10^{-3}$ & 0.0153 \\
\bottomrule
\end{tabular}
\end{table}

In addition, to further illustrate the effectiveness of Descent-Module, we examine the error between the descent direction $d$ and the optimal solution of its corresponding subproblem~\eqref{eq:subproblem-1}. The experimental results are provided in appendix~\ref{exp:subprob}. We also compare Descent-Module with the original projected subgradient method, and the results are reported in appendix~\ref{exp:pgm}.

\subsection{Simple non-convex optimization}

We now examine a simple non-convex adaptation of the quadratic program 
\begin{equation}\label{prob:nonconvex}
    \min_{y \in \mathbb{R}^n}\:\frac{1}{2} y^T Q y + p^T \sin(y), \quad \text{s.t. } Ay = x, \; Gy \leqslant h,
\end{equation}
where $\sin(y)$ represents the component-wise application of the sine function to the vector $y$. Compared to problem~\eqref{prob:qp}, the only difference is that $y$ in the objective function is replaced with $\sin(y)$, which makes the problem non-convex. 

The experimental results are presented in Table~\ref{tab:nonconvex}. The initial solutions use those from DC3, and the final solutions produced by Descent-Net achieve a relative objective error of $2.3\times 10^{-4}$. Moreover, Descent-Net solves the instances approximately 19 times faster than the solver \verb|IPOPT|.

\begin{table}[htbp]
\caption{Results on the simple non-convex task evaluated on the test set with 833 samples, with problem dimension $n=100$, $n_{eq}=50$, and $n_{ineq}=50$. For methods that support parallel execution, the runtime corresponds to the time required to process one batch with a batch size of 833.}
\label{tab:nonconvex}
\centering
\begin{tabular}{lccccc}
\toprule
Method & ineq. vio. & eq. vio. & sol. rel. err. & obj. rel. err. & Time (s) \\
\midrule
IPOPT                 & 0.0000 & 0.0000 & 0.0 & 0.0 & 0.3364  \\
Projection method     & 0.0000 & 0.0000 & $5.4\times 10^{-2}$ & $1.8\times 10^{-3}$ & 0.2472 \\
CBWF                  & 0.0000 & 0.0000 & $2.6\times 10^{-1}$ & $5.5\times 10^{-2}$ & $0.0364$ \\
DC3                   & 0.0000 & 0.0000 & $4.4\times 10^{-1}$ & $3.1\times 10^{-1}$ & 0.0025 \\
\textbf{Descent-Net} & $0.0000$ & $0.0000$ & $1.5\times 10^{-2}$ & $2.3\times 10^{-4}$ & 0.0145 \\
\bottomrule
\end{tabular}
\end{table}

\subsection{Portfolio optimization}

A widely applicable instance of quadratic programming in real-world settings is the mean-variance portfolio optimization problem. The objective is to minimize portfolio risk while satisfying practical portfolio allocation constraints:
\begin{equation}
\min_{\mathbf{w}\in\mathbb{R}^n} \ \mathbf{w}^\top \Sigma \mathbf{w}
\quad \text{s.t.} \quad
\mathbf{w}^\top \mathbf{1} = 1,\;
\mathbf{w}^\top \mu \geqslant r_{\min},\;
\mathbf{w} \geqslant 0,
\label{eq:portfolio}
\end{equation}
where $\mathbf{w}$ denotes asset weights, $\Sigma\in\mathbb{R}^{n\times n}$ is the covariance matrix, $\mu\in\mathbb{R}^n$ is the expected return vector, and $r_{\min}$ is the minimum return requirement.

We conduct portfolio optimization experiments with $n=100$, $n=800$, and $n=4000$ assets to evaluate the practical effectiveness and scalability of our method. For each problem size, we generate 10,000 synthetic benchmark instances and use the equal-weighted portfolio $w_i = 1/n$ as the initial solution. Descent-Net is compared against the solver \verb|OSQP| and DC3, with the combined numerical results summarized in Table~\ref{tab:portfolio_all}.

\begin{table}[htbp]
\caption{Results on the portfolio optimization task evaluated on the test set with 1000 samples. 
The times of Descent-Net correspond to the average runtime for a batch of instances, with batch sizes of 512, 100, and 10 for problem sizes $n=100$, $n=800$, and $n=4000$, respectively.}
\label{tab:portfolio_all}
\centering
\begin{tabular}{lccccc}
\toprule
\multicolumn{6}{c}{\textbf{Number of assets $n=100$}} \\
\midrule
Method & ineq. vio. & eq. vio. & sol. rel. err. & obj. rel. err. & Time (s) \\
\midrule
OSQP        & $0.0000$ & $0.0000$ & $0.0$ & $0.0$ & $0.0015$ \\
HPR-QP      & $0.0000$ & $0.0000$ & $6.9 \times 10^{-6}$ & $1.1 \times 10^{-7}$ & $0.0645$ \\
CuClarabel     & $0.0000$ & $0.0000$ & $1.1 \times 10^{-5}$ & $4.0 \times 10^{-9}$ & $0.0407$ \\
DC3         & $0.0000$ & $0.0000$ & $2.8$ & $5.4 \times 10^{1}$ & $0.0125$ \\
Descent-Net & $0.0000$ & $0.0000$ & $1.4 \times 10^{-4}$ & $4.9 \times 10^{-6}$ & $0.0019$ \\
\midrule
\multicolumn{6}{c}{\textbf{Number of assets $n=800$}} \\
\midrule
Method & ineq. vio. & eq. vio. & sol. rel. err. & obj. rel. err. & Time (s) \\
\midrule
OSQP        & $0.0000$ & $0.0000$ & $0.0$ & $0.0$ & $0.0207$ \\
HPR-QP      & $0.0000$ & $0.0000$ & $5.0 \times 10^{-6}$ & $6.2 \times 10^{-8}$ & $0.1683$ \\
CuClarabel     & $0.0000$ & $0.0000$ & $5.4 \times 10^{-5}$ & $4.0 \times 10^{-8}$ & $0.1168$ \\
Descent-Net & $0.0000$ & $0.0000$ & $9.3 \times 10^{-4}$ & $7.3 \times 10^{-6}$ & $0.0019$ \\
\midrule
\multicolumn{6}{c}{\textbf{Number of assets $n=4000$}} \\
\midrule
Method & ineq. vio. & eq. vio. & sol. rel. err. & obj. rel. err. & Time (s) \\
\midrule
OSQP        & $0.0000$ & $0.0000$ & $0.0$ & $0.0$ & $0.6024$ \\
HPR-QP      & $0.0000$ & $0.0000$ & $1.6 \times 10^{-5}$ & $2.2 \times 10^{-6}$ & $0.3676$ \\
CuClarabel     & $0.0000$ & $0.0000$ & $4.2 \times 10^{-5}$ & $2.2 \times 10^{-6}$ & $0.5232$ \\
Descent-Net & $0.0000$ & $0.0000$ & $1.6 \times 10^{-4}$ & $1.2 \times 10^{-6}$ & $0.0044$ \\
\bottomrule
\end{tabular}
\end{table}

We find that DC3 fails to produce feasible solutions for $n=800$ and $n=4000$ because its training diverges. This is likely due to DC3's reliance on gradient steps, which are used to enforce inequality constraints, but whose step sizes and momentum decay parameters are difficult to tune for large-scale settings. In contrast, Descent-Net remains accurate and highly efficient across all problem sizes.

The \verb|OSQP| \verb|HPR-QP| and \verb|CuClarabel|'s times report the average runtime for a single instance, whereas the Descent-Net times correspond to the average runtime for a batch of instances. As shown, Descent-Net achieves lower runtimes while maintaining objective errors on the order of $10^{-6}$. The speedups are particularly significant for large-scale instances, demonstrating strong scalability to problems with thousands of variables.

\subsection{AC-OPF}

The objective of the AC optimal power flow (AC-OPF) problem is to determine the optimal power generation that balances supply and demand while satisfying both physical laws and operational constraints of the network. A compact formulation of the AC-OPF problem is as follows:
\begin{equation}
\begin{aligned}
\min_{p_g \in \mathbb{R}^n, \, q_g \in \mathbb{R}^n, \, v \in \mathbb{C}^n} 
    & \quad p_g^\top Q p_g + b^\top p_g  \\[6pt]
\text{s.t.} \quad 
& p_g^{\min} \leqslant p_g \leqslant p_g^{\max}, \quad 
 q_g^{\min} \leqslant q_g \leqslant q_g^{\max}, \quad
 v_m^{\min} \leqslant \lvert v\rvert \leqslant v_m^{\max}, \\ 
& v_a^{\min} \leqslant \angle v_i - \angle v_j \leqslant v_a^{\max}, \quad 
 \lvert v_i(\overline{v_i} - \overline{v_j})\overline{w}_{ij}\rvert \leqslant S_{ij}^{\max}, \\ 
& (p_g - p_d) + (q_g - q_d)i = \mathrm{diag}(v)\overline{W}\,\overline{v}.
\end{aligned}
\end{equation}
Here, $p_d, q_d \in \mathbb{R}^n$ denote the active and reactive power demands, 
and $p_g, q_g \in \mathbb{R}^n$ are the corresponding power generations. 
The complex bus voltage is represented by $v \in \mathbb{C}^n$. 
The nodal admittance matrix $W \in \mathbb{C}^{n \times n}$ encodes the network topology.

Since the equality constraints in this problem are nonlinear, a first-order approximation is not very accurate. As a result, even if the descent direction $d$ is orthogonal to the gradients of all equality constraints, the updated point may still fail to satisfy them. To address this issue, we adopt an equation completion approach, and the details are provided in the appendix \ref{AC-OPF_update}.

We conduct experiments on two AC-OPF problem instances of different scales. Besides D-Proj (i.e., DC3), we use H-Proj \cite{liang2024homeomorphic} as another initialization strategy, and denote the corresponding solutions by $y^D$ and $y^H$. D-Proj originally reduces violations of inequality constraints by performing a gradient descent step on the $\ell_2$ norm of constraint violations. In our experiments, we found that this gradient step is time-consuming and, in practice, often unnecessary. Therefore, we introduce an improved variant of D-Proj by removing the gradient-descent step. This modification significantly reduces the computational time while maintaining comparable satisfaction of the inequality constraints. The optimized initialization obtained using this approach is denoted by $y^{D^\ast}$.

The results in Table~\ref{tab:AC-OPF} indicate that Descent-Net produces solutions with relative objective errors on the order of $10^{-4}$ across all cases, outperforming all neural network–based solvers in accuracy. Moreover, it is approximately four times faster than the standard solvers. The relative error of the solution obtained by Descent-Net decreases only marginally compared to the initial point, which may be due to the non-convex nature of the AC-OPF problem. Note that the runtime of \verb|PYPOWER| is the average per instance since it solves sequentially, while Descent-Net solves instances in parallel, providing much higher efficiency. 

\begin{table}[htbp]
\caption{Results on the AC-OPF task evaluated on the test set with 1024 samples. For methods that support parallel execution, the runtime corresponds to the time required to process one batch with a batch size of 1024.}
\label{tab:AC-OPF}
\resizebox{\textwidth}{!}{%
\centering
\begin{tabular}{lccccc}
\toprule
\multicolumn{6}{c}{\textbf{30-bus system: }$n_{\text{eq}} = 60, n_{\text{ineq}} = 84$} \\
\midrule
Method & ineq. vio. & eq. vio. & sol. rel. err. & obj. rel. err. & Time (s) \\
\midrule
PYPOWER             & $0.0000$ & $0.0000$ & $0.0$ & $0.0$ & $0.2890$ \\
Projection method   & $0.0000$ & $0.0000$ & $5.6\times 10^{-3}$ & $1.7\times 10^{-2}$ & $0.0397$ \\
Warm start          & $0.0000$ & $0.0000$ & $5.5\times 10^{-3}$ & $1.7\times 10^{-2}$ & $0.0393$ \\
D-Proj              & $0.0000$ & $0.0000$ & $5.9\times 10^{-3}$ & $1.9\times 10^{-2}$ & $0.2442$ \\
H-Proj              & $0.0000$ & $0.0000$ & $5.8\times 10^{-3}$ & $1.7\times 10^{-2}$ & $0.2865$ \\
\textbf{Descent-Net ($y_0=y^D$)} & $0.0000$ & $0.0000$ & $4.2\times 10^{-3}$ & $3.6\times 10^{-4}$ & $0.2619$ \\
\textbf{Descent-Net ($y_0=y^H$)} & $0.0000$ & $0.0000$ & $3.5\times 10^{-3}$ & $3.3\times 10^{-4}$ & $0.3039$ \\
\textbf{Descent-Net ($y_0=y^{D^\ast}$)} & $0.0000$ & $0.0000$ & $3.6\times 10^{-3}$ & $2.8\times 10^{-4}$ & $0.0434$ \\
\midrule
\addlinespace[2pt]
\multicolumn{6}{c}{\textbf{118-bus system: }$n_{\text{eq}} = 236, n_{\text{ineq}} = 452$} \\
\midrule
Method & ineq. vio. & eq. vio. & sol. rel. err. & obj. rel. err. & Time (s) \\
\midrule
PYPOWER             & $0.0000$ & $0.0000$ & $0.0$ & $0.0$ & $0.6423$ \\
Projection method   & $0.0000$ & $0.0000$ & $1.5\times 10^{-2}$ & $2.4\times 10^{-3}$ & $0.3040$ \\
Warm start          & $0.0000$ & $0.0000$ & $9.3\times 10^{-3}$ & $1.8\times 10^{-3}$ & $0.3137$ \\
D-Proj              & $0.0000$ & $0.0000$ & $1.3\times 10^{-2}$ & $2.4\times 10^{-3}$ & $0.7542$ \\
H-Proj              & $0.0000$ & $0.0000$ & $1.4\times 10^{-2}$ & $3.1\times 10^{-3}$ & $0.6682$ \\
\textbf{Descent-Net ($y_0=y^D$)} & $0.0000$ & $0.0000$ & $1.2\times 10^{-2}$ & $2.5\times 10^{-4}$ & $0.9480$ \\
\textbf{Descent-Net ($y_0=y^H$)} & $0.0000$ & $0.0000$ & $1.4\times 10^{-2}$ & $7.2\times 10^{-4}$ & $0.8637$ \\
\textbf{Descent-Net ($y_0=y^{D^\ast}$)} & $0.0000$ & $0.0000$ & $2.2\times 10^{-3}$ & $3.0\times 10^{-4}$ & $0.1622$ \\
\bottomrule
\end{tabular}%
}
\end{table}

\section{Conclusion}

In this work, we presented Descent-Net, a neural architecture that incorporates first-order optimality structure for solving constrained optimization problems, with the goal of improving the effectiveness of existing L2O methods. The model generalizes well across different types of instances, including quadratic programs, simple non-convex variants of QPs, and problems with nonlinear constraints such as AC-OPF. In our experiments, Descent-Net produces more accurate solutions than prior L2O approaches while also offering faster inference compared to classical solvers. Moreover, we demonstrate its scalability on large-scale QP problems and portfolio optimization tasks with thousands of assets, highlighting its potential for real-world applications. Future work includes extending the framework to nonlinear constraints and using Descent-Net solutions as warm starts for standard solvers to get high-accurate solutions.

\begin{appendices}

\section{Experiment setting}\label{exp_details}

For the convex QPs and the simple non-convex problems, the parameters are generated as follows. 
The matrix $Q$ is constructed as $Q = R^\top R$, where $R \in \mathbb{R}^{n \times n}$ has i.i.d.\ standard normal entries, ensuring that $Q$ is symmetric and positive semidefinite.  
The constraint matrices $A$ and $G$ are also sampled with i.i.d. standard normal entries.
For each instance, the components of $x$ are drawn i.i.d.\ from the uniform distribution on $[-1,1]$. 
To ensure that the generated problem has a feasible solution, we set $h_i = \sum_j \lvert(G A^{\dagger})_{ij}\rvert $, where $A^{\dagger}$ denotes the Moore--Penrose pseudoinverse of $A$. 
For the AC-OPF experiments, we use the datasets provided in \cite{liang2024homeomorphic}.

For the portfolio optimization experiments, to model slowly evolving asset co-movements, the covariance matrix is fixed across instances and constructed as $\Sigma = R^\top R$, where $R \in \mathbb{R}^{n \times n}$ has i.i.d.\ standard normal entries. 
For each instance, the expected return vector $\mu$ is sampled independently from a uniform distribution over $[0,1]$, and the return threshold $r_{\min}$ is drawn independently: for training, $r_{\min} \sim \mathrm{Uniform}[0.05,0.4]$, while for testing, $r_{\min}$ is taken as a linearly spaced sequence over the same interval.

We summarize the hyperparameters used in our experiments in Table~\ref{tab:hyperparams}. 
Below we briefly describe several important parameters:
\begin{itemize}
    \item $S$: the number of update steps performed in our Descent Net.
    \item $K$: the number of layers within each Descent module, controlling the expressive power of the network.
    \item $\lambda_h$: the penalty factor for equality constraint violations.
    \item $\lambda_g$: the penalty factor for inequality constraint violations.
    \item $q$: the hidden dimension of operator $T$, which specifies the capacity of feature transformation inside each descent step.
    \item $M, \epsilon$: parameters in $c_j$, which is defined in~\eqref{eq:cj}.
\end{itemize}

\begin{table}[htbp]
\centering
\caption{Hyperparameters used in different experiments}
\label{tab:hyperparams}
\begin{tabular}{lcccccccc}
\toprule
\textbf{Problem} 
& \textbf{Epochs} 
& $S$ 
& $K$
& $\lambda_h$
& $\lambda_g$
& $q$
& $M$
& $\epsilon$ \\
\midrule
QP ($n$=100)  
& 150 & 8  & 3 & 5 & 5 & 300 & 1 & 0.0005 \\
QP ($n$=1000)  
& 300 & 5  & 1 & 5 & 5 & 3000 & 1 & 0.0005 \\
QP ($n$=5000)  
& 300 & 8  & 1 & 5 & 5 & 15000 & 1 & 0.0005 \\
Non-convex  
& 150 & 10 & 3 & 5 & 5 & 300 & 1 & 0.0005 \\
AC-OPF (node=30) 
& 300 & 3  & 3 & 5 & 5 & 120 & 1 & 0.0001 \\
AC-OPF (node=118) 
& 300 & 3  & 3 & 5 & 5 & 1080 & 1 & 0.0001 \\
Portfolio ($n$=100) 
& 300 & 3  & 1 & 5 & 5 & 800 & 1 & 0.0001 \\
Portfolio ($n$=800) 
& 300 & 2  & 1 & 5 & 5 & 1200 & 1 & 0.0001 \\
Portfolio ($n$=4000) 
& 300 & 2  & 1 & 5 & 5 & 6000 & 1 & 0.0001 \\
\bottomrule
\end{tabular}
\end{table}

For the experiments other than portfolio optimization, we train the Descent module using the Adam optimizer with an initial learning rate of $0.01$. The learning rate is reduced by a factor of $0.1$ at epochs 50, 100, and 150. The step-size parameter $\beta$ is updated separately using the SGD optimizer with a fixed learning rate of $0.01$. In the AC-OPF experiments, we additionally clip the gradient norm at a threshold of $1$ to stabilize training, following \cite{zhang2019gradient}.

In contrast, the portfolio optimization experiments adopt a different training configuration. Both the Descent module and the step size $\beta$ are optimized using Adam. The initial learning rates are set to $1\times 10^{-3}$ for the Descent module, and to $0.1$, $0.1$, and $0.01$ for $\beta$ in the $n=100$, $n=800$, and $n=4000$ settings, respectively. All learning rates decay by a factor of $0.1$ at epochs 100, 150, and 200 over a total of 300 training epochs.

All experiments were conducted on a server equipped with two AMD EPYC 9754 CPUs (128 cores each, 3.1 GHz) and an NVIDIA H100 GPU.

\section{Effect of Layer number $K$ and Descent steps $S$}

We conducted experiments on the convex quadratic program~\eqref{prob:qp} to evaluate the performance of Descent-Modules with different numbers of layers $K$. The results are shown in Table~\ref{tab:vary_K}. It can be seen that increasing $K$ leads to a slight improvement in performance, but the gains are not significant. Considering computational efficiency, we ultimately choose $K=3$ as the number of layers.

\begin{table}[htbp]
\caption{Performance of Descent-Module with varying $K$}
\label{tab:vary_K}
\centering
\renewcommand{\arraystretch}{1.2}
\begin{tabular}{c cc cc}
\toprule
Layer & ineq. vio. & eq. vio. & sol. rel. err. & obj. rel. err. \\
\midrule
$K=1$  & 0.0000 & 0.0000 & $9.8\times 10^{-2}$ & $2.2\times 10^{-2}$  \\
$K=2$  & 0.0000 & 0.0000 & $9.3\times 10^{-2}$ & $1.8\times 10^{-2}$  \\
$K=3$  & 0.0000 & 0.0000 & $9.2\times 10^{-2}$ & $1.7\times 10^{-2}$  \\
$K=4$  & 0.0000 & 0.0000 & $8.9\times 10^{-2}$ & $1.7\times 10^{-2}$  \\
\bottomrule
\end{tabular}
\end{table}

With $K$ fixed at 3, we further examined the effect of different Descent steps $S$, as summarized in Table~\ref{tab:vary_S}. When the number of update steps is 1, Descent-Net already achieves a solution with a relative error on the order of $10^{-2}$. Increasing the steps to 4 reduces the error to the $10^{-3}$ level, and further increasing to 8 reduces it to the $10^{-4}$ level.

\begin{table}[htbp]
\caption{Performance of Descent-Net with varying $S$}
\label{tab:vary_S}
\centering
\renewcommand{\arraystretch}{1.2}
\begin{tabular}{c cc cc}
\toprule
Descent Step & ineq. vio. & eq. vio. & sol. rel. err. & obj. rel. err. \\
\midrule
$S=1$  & 0.0000 & 0.0000 & $8.9\times 10^{-2}$ & $1.7\times 10^{-2}$  \\
$S=2$  & 0.0000 & 0.0000 & $6.0\times 10^{-2}$ & $1.1\times 10^{-2}$  \\
$S=4$  & 0.0000 & 0.0000 & $3.5\times 10^{-2}$ & $3.4\times 10^{-3}$  \\
$S=6$  & 0.0000 & 0.0000 & $2.5\times 10^{-2}$ & $1.8\times 10^{-3}$  \\
$S=8$  & 0.0000 & 0.0000 & $1.2\times 10^{-2}$ & $3.1\times 10^{-4}$  \\
\bottomrule
\end{tabular}
\end{table}

\section{Step Size Selection Strategies}\label{exp:step size}

We compare the effectiveness of three different step size selection strategies:  
\begin{itemize}
    \item A fixed step size $\alpha = 1/M$;  
    \item The maximum feasible step size $\alpha_{\max}$ that ensures feasibility;  
    \item A learnable scale factor $\sigma(\beta)$ applied to $\alpha_{\max}$.
\end{itemize}

We perform comparative experiments on the convex quadratic program~\eqref{prob:qp}, evaluating three methods based on the feasibility and optimality of their solutions after a fixed number of update steps $S=8$. The corresponding results are presented in Table~\ref{tab:stepsize}. As shown, both the fixed step size $1/M$ and the maximum feasible step size $\alpha_{\max}$ perform worse than our final choice $\alpha=\sigma(\beta)\alpha_{\max}$. The limitation of $1/M$ lies in its lack of flexibility, as a fixed step size cannot adapt to the varying landscape of the problem. And $\sigma(\beta)\alpha_{\max}$ outperforms $\alpha_{\max}$ because the learnable parameter $\beta$ captures useful information that enables a more appropriate scaling of the maximum step size.

\begin{table}[htbp]
\caption{Comparison of different step size selection strategies}
\label{tab:stepsize}
\centering
\renewcommand{\arraystretch}{1.2}
\begin{tabular}{c cc cc}
\toprule
Method & ineq. vio. & eq. vio. & sol. rel. err. & obj. rel. err. \\
\midrule
$1/M$  & 0.0000 & 0.0000 & $9.0\times 10^{-2}$ & $1.2\times 10^{-2}$  \\
$\alpha_{\max}$ & 0.0000 & 0.0000 & $1.1\times 10^{-1}$ & $3.3\times 10^{-2}$\\
$\sigma(\beta) \alpha_{\max}$ & 0.0000 & 0.0000 & $1.2\times 10^{-2}$ & $3.1\times 10^{-4}$  \\
\bottomrule
\end{tabular}
\end{table}

\section{Subproblem}\label{exp:subprob}

In our method, each descent direction $d_s$ is obtained by solving a subproblem~\eqref{eq:subproblem-1}. To assess the ability of the Descent-Net to solve this subproblem, we measure the relative error of the subproblem’s objective value between each layer’s output $d_k$ and the corresponding optimal solution.

We conduct experiments on the convex QP task. For simplicity, we set $S=1$, performing only a single update, and fix the number of Descent-Net layers to $K=3$. We then evaluate the trained network, with the results reported in Table~\ref{tab:subprob}. As shown, the objective value of the subproblem (Descent value) decreases progressively across layers, and by the final layer (layer 3), the relative error in the objective value has already been reduced to $0.001$, demonstrating the efficiency of the Descent-Net in solving the subproblem.

\begin{table}[htbp]
\caption{Effectiveness of Descent-Net in solving subproblem}
\label{tab:subprob}
\centering
\begin{tabular}{ccc}
\toprule
Layer & Descent Value & Relative Error \\
\midrule
0    & 1740.4817   & 2.6463    \\
1    & 505.2591   & 0.0585    \\
2    & 478.2721   & 0.0020    \\
3    & 477.7893   & 0.0010    \\
\bottomrule
\end{tabular}
\label{tab:descent_errors}
\end{table}

\section{Learnable $\gamma$ in Descent-Net}\label{exp:gamma}

We recorded the values of the learnable parameter $\gamma$ in each layer of the $S$ Descent Modules of the trained Descent-Net. For both QP and Nonconvex problems, $\gamma$ is initialized to $0.1$, while for the AC-OPF problem it is initialized to $1$. The results are presented in Table~\ref{tab:gamma_simple}, Table~\ref{tab:gamma_nonconvex} and Table~\ref{tab:AC-OPF_gamma}. These results indicate that the network is able to adjust $\gamma$ dynamically across layers. In many cases, the values of $\gamma$ tend to decrease with the layer depth, which is consistent with the requirement of diminishing step sizes for convergence in subgradient methods.

\begin{table}[htbp]
\centering
\caption{Values of $\gamma$ in Descent-Net across some steps for Simple QP problems}
\label{tab:gamma_simple}
\begin{tabular}{lcccc}
\toprule
 & Step 1 & Step 3 & Step 5 & Step 7 \\
\midrule
$\gamma_1$ & $1.15 \times 10^{-2}$ & $6.37 \times 10^{-3}$ & $1.03 \times 10^{-2}$ & $1.75 \times 10^{-2}$ \\
$\gamma_2$ & $1.41 \times 10^{-2}$ & $5.39 \times 10^{-2}$ & $5.71 \times 10^{-3}$ & $4.49 \times 10^{-3}$ \\
$\gamma_3$ & $9.85 \times 10^{-2}$ & $1.00 \times 10^{-1}$ & $9.93 \times 10^{-4}$ & $2.23 \times 10^{-3}$ \\
\bottomrule
\end{tabular}
\end{table}

\begin{table}[htbp]
\centering
\caption{Values of $\gamma$ in Descent-Net across some steps for Nonconvex problems}
\label{tab:gamma_nonconvex}
\begin{tabular}{lccccc}
\toprule
 & Step 1 & Step 3 & Step 5 & Step 7 & Step 9 \\
\midrule
$\gamma_1$ & $4.79 \times 10^{-2}$ & $1.02 \times 10^{-2}$ & $3.40 \times 10^{-3}$ & $8.22 \times 10^{-3}$ & $1.95 \times 10^{-2}$ \\
$\gamma_2$ & $4.92 \times 10^{-2}$ & $9.53 \times 10^{-2}$ & $4.39 \times 10^{-4}$ & $3.08 \times 10^{-3}$ & $4.78 \times 10^{-3}$ \\
$\gamma_3$ & $1.19 \times 10^{-1}$ & $5.53 \times 10^{-4}$ & $3.53 \times 10^{-2}$ & $1.94 \times 10^{-3}$ & $3.25 \times 10^{-3}$ \\
\bottomrule
\end{tabular}
\end{table}

\begin{table}[htbp]
\centering
\caption{Values of $\gamma$ in Descent-Net across steps for AC-OPF problems}
\label{tab:AC-OPF_gamma}
\begin{tabular}{lccc @{\hskip 1cm} lccc}
\toprule
\multicolumn{4}{c}{\textbf{node = 30, H-Proj}} & \multicolumn{4}{c}{\textbf{node = 30, D-Proj}} \\
\midrule
& Step 1 & Step 2 & Step 3 & & Step 1 & Step 2 & Step 3 \\
\midrule
$\gamma_1$ & 1.00 & 1.00 & 1.00 & $\gamma_1$ & 1.00 & 1.00 & 1.00 \\
$\gamma_2$ & 0.99 & 1.01 & 0.99 & $\gamma_2$ & 0.99 & 0.99 & 1.00 \\
$\gamma_3$ & 1.10 & 0.01 & 0.84 & $\gamma_3$ & 1.42 & 0.20 & 1.01 \\
\midrule
\multicolumn{4}{c}{\textbf{node = 118, H-Proj}} & \multicolumn{4}{c}{\textbf{node = 118, D-Proj}} \\
\midrule
& Step 1 & Step 2 & Step 3 & & Step 1 & Step 2 & Step 3 \\
\midrule
$\gamma_1$ & 1.00 & 1.00 & 1.00 & $\gamma_1$ & 1.00 & 1.00 & 1.00 \\
$\gamma_2$ & 1.00 & 1.00 & 1.00 & $\gamma_2$ & 1.00 & 1.00 & 1.00 \\
$\gamma_3$ & 1.07 & 1.06 & 1.10 & $\gamma_3$ & 1.29 & 1.03 & 0.99 \\
\bottomrule
\end{tabular}
\end{table}

\section{Comparison with PGM (Projected Subgradient Method)}\label{exp:pgm}

We compare Descent-Net with the original PGM. Specifically, we remove the operator $T^k$ in Descent-Net so that each layer reduces to \eqref{pgm}. We still treat the step size $\gamma_k$ as a learnable parameter and train this degenerated network in the same manner as Descent-Net. 

We evaluate the performance under different numbers of iterations $K$, with the results reported in Table~\ref{tab:pgm}. We observe that PGM is inefficient, as the relative error in the objective value compared to the initial solution decreases very little with increasing iterations. This is likely due to the difficulty of selecting an appropriate step size for PGM. In contrast, the Descent-Net achieves strong solution quality with only three layers, which also leads to a significant advantage in computational efficiency.

\begin{table}[htbp]
\caption{Comparison of Descent-Net and PGM on the convex QP task.}
\label{tab:pgm}
\centering
\begin{tabular}{lccccc}
\toprule
Method & ineq. vio. & eq. vio. & sol. rel. err. & obj. rel. err. & Time (s) \\
\midrule
\textbf{PGM ($K=10$)} & $0.0000$ & $0.0000$ & $1.9 \times 10^{-1}$ & $1.1 \times 10^{-1}$ & $0.0270$ \\
\textbf{PGM ($K=20$)} & $0.0000$ & $0.0000$ & $1.9 \times 10^{-1}$ & $1.1 \times 10^{-1}$ & $0.0501$ \\
\textbf{PGM ($K=50$)} & $0.0000$ & $0.0000$ & $1.9 \times 10^{-1}$ & $1.1 \times 10^{-1}$ & $0.1119$ \\
\textbf{Descent-Net ($K=3$)} & $0.0000$ & $0.0000$ & $1.2\times 10^{-2}$ & $3.1\times 10^{-4}$ & $0.0132$ \\
\bottomrule
\end{tabular}
\end{table}

\section{Descent Updates in the AC-OPF Problem}\label{AC-OPF_update}

In the AC-OPF problem, given $(n-m)$ entries of a feasible point $y \in \mathbb{R}^n$, 
the remaining $m$ entries are, in general, determined by the $m$ equality constraints $h_x(y)=0$. 

Following the method in \cite{dontidc3,liang2024homeomorphic,wu2025constraint}, we assume the existence of a function $\varphi_x: \mathbb{R}^{n-m} \rightarrow \mathbb{R}^m$ such that $h_x([z, \varphi_x(z)]) = 0$. 
This allows us to eliminate the equality constraints and reformulate the problem in terms of the partial variable $z$. 
We can then perform descent direction updates on $z$, where the optimization problem involves only the inequality constraints:
\begin{equation}
    \min_{z \in \mathbb{R}^{n-m}} \tilde{f}_x(z), 
    \quad \text{s.t.} \quad \tilde{g}_x(z) \leqslant 0,
\end{equation}
where 
$\tilde{f}_x(z) = f_x\left([z^T, \varphi_x(z)^T]^T\right)$ 
and 
$\tilde{g}_x(z) = g_x\left([z^T, \varphi_x(z)^T]^T\right)$.

Using the chain rule, we can compute the derivative of $\varphi_x$ with respect to $z$, 
even without an explicit expression of $\varphi_x$:
\begin{equation*}
\begin{aligned}
    0 &= \frac{\mathrm{d}}{\mathrm{d}z} h_x \big( \varphi_x(z) \big) 
       = \frac{\partial h_x}{\partial z} 
       + \frac{\partial h_x}{\partial \varphi_x(z)} \frac{\partial \varphi_x(z)}{\partial z} \\
      &= J^{h}_{:,0:m} + J^{h}_{:,m:n} \frac{\partial \varphi_x(z)}{\partial z}, \\
    \Rightarrow & \quad 
    \frac{\partial \varphi_x(z)}{\partial z} 
    = -\big(J^{h}_{:,m:n}\big)^{-1} J^{h}_{:,0:m}.
\end{aligned}
\end{equation*}
Here, $J^h \in \mathbb{R}^{m \times n}$ denotes the Jacobian matrix of the equality constraints $h_x(y)$ with respect to $y$. The notation $J^{h}_{:,0:m}$ and $J^{h}_{:,m:n}$ represents the submatrices corresponding to the partial derivatives with respect to $z$ and $\varphi_x(z)$, respectively.

From this result, we can further obtain the gradients of the objective and inequality constraints with respect to $z$. These gradient informations are then passed to the Descent-Net, which outputs the descent direction $d_z$ for the partial variable $z$. 

In order to obtain the complete descent direction 
$d = [d_z, d_{\varphi}]$ for $y$, we also need the expression of $d_{\varphi}$. 
To ensure that the equality constraints remain satisfied, we require the following
\begin{equation*}
    \begin{aligned}
        h(z + \alpha d_z, \varphi(z) + \alpha d_{\varphi})
    \approx& h\big(z,\varphi(z)\big) + \alpha\,J^{h}
    \begin{bmatrix} d_z \\[2pt] d_{\varphi} \end{bmatrix}\\
    =& h\big(z,\varphi(z)\big) + \alpha \big( J^{h}_{:,0:m} d_z + J^{h}_{:,m:n} d_{\varphi} \big) = 0,
    \end{aligned}
\end{equation*}
where $\alpha>0$ is the step size. Hence, we obtain
\begin{equation*}
    d_{\varphi} 
    = -\big(J^{h}_{:,m:n}\big)^{-1} J^{h}_{:,0:m}\, d_z 
    \;-\; \big(J^{h}_{:,m:n}\big)^{-1}\tfrac{h\big(z,\varphi(z)\big)}{\alpha}.
\end{equation*}

\end{appendices}

%%===========================================================================================%%
%% If you are submitting to one of the Nature Portfolio journals, using the eJP submission   %%
%% system, please include the references within the manuscript file itself. You may do this  %%
%% by copying the reference list from your .bbl file, paste it into the main manuscript .tex %%
%% file, and delete the associated \verb+\bibliography+ commands.                            %%
%%===========================================================================================%%

\bibliographystyle{unsrt}
\bibliography{ref}

\end{document}